\def\pr{\textup{ P\/}}
\def\ex{\textup{E\/}}
\def\eps{\varepsilon}
\def\la{\lambda}
\def\part{\partial}
\newcommand{\beq}{\begin{equation}}
\newcommand{\eeq}{\end{equation}}
\newtheorem{Theorem}{Theorem}[section]
\newtheorem{Lemma}[Theorem]{Lemma}
\newtheorem{Corollary}[Theorem]{Corollary}
\theoremstyle{remark}
\numberwithin{equation}{section}
\date{\today}
\begin{document}

\title[Local maxima supports]{On random quadratic forms: supports of potential local maxima}
\author{Boris Pittel}
\address{Department of Mathematics, The Ohio State University, Columbus, Ohio 43210, USA}
\email{bgp@math.ohio-state.edu}

\keywords
{stable polymorphisms,  random fitnesses, asymptotics }

\subjclass[2010] {34E05, 60C05}

\begin{abstract} The selection model in population genetics is a dynamic system on the set of
of the probability distributions $\bold p=(p_1,\dots,p_n)$ of the alleles $A_1,\dots, A_n$, with 
$p_i(t+1)$ proportional to $p_i(t)$ times $\sum_j f_{i,j}p_j(t)$, and $f_{i,j}=f_{j,i}$ interpreted as a fitness
of the gene pair $(A_i,A_j)$. It is known that $\hat {\bold p}$ is a locally stable equilibrium iff $\hat{\bold p}$
 is a strict local maximum of the quadratic form $\bold p^T\bold f\,\bold p$. Usually there are multiple
local maxima and $\lim\bold p(t)$ depends on $\bold p(0)$. To address the question of a
{\it typical\/} behavior of $\{\bold p(t)\}$, John Kingman considered the case when 
the $f_{i,j}$ are independent, $[0,1]$-uniform. He proved
 that with high probability (w.h.p.) no local maximum may have more than $2.49 n^{1/2}$ positive
 components, and reduced $2.49$ to $2.14$ for a non-biological case of 
exponentials on $[0,\infty)$. We show that the constant $2.14$ serves a broad class of the smooth densities 
on $[0,1]$ with the increasing hazard rate. As for a lower bound, we prove that w.h.p. for all $k\le 2n^{1/3}$ 
there are many $k$-element subsets of $[n]$ that pass a partial test to be a support of a local maximum. Still it may well be
that w.h.p. the actual supports are much smaller. In that direction we prove that  w.h.p. {\bf (i)\/} a support of a local maximum,
that does not contain a support of a local equilibrium, is very unlikely to have size exceeding $(2/3)\log_2 n$, and {\bf (ii)\/}
for the uniform fitnesses, there are super-polynomially many potential supports free of local equilibriums, of size close to $(1/2)\log_2 n$.
\end{abstract}

\maketitle

\section{Introduction and main results}  The classic selection model in population genetics is a dynamic system on the set 
of the probability distributions $\bold p=(p_1,\dots,p_n)\in \Delta_n:=\{\bold x\ge \bold 0,\,\sum_{i\in [n]}x_i=1\}$ of the alleles $A_1,\dots, A_n$ at the single locus:
\begin{equation}\label{basicrec}
p_i(t+1)=p_i(t)\cdot\frac{\sum_jf_{i,j}\,p_j(t)}{\sum_{r,s}f_{r,s}\,p_r(t) p_s(t)},\quad i\in [n].
\end{equation}
Here each $f_{r,s}=f_{s,r}\in [0,1]$ is interpreted as the fitness, i.e. the probability that the unordered gene pair $(A_r,A_s)$
survives to an adult age. While the dynamic behavior of $\bold p(t)$ in this model certainly depends on the fitness matrix $\bold f=\{f_{r,s}\}$, it has long been known that the {\it average fitness\/} $V(\bold p(t)):=\sum_{r,s}f_{r,s}\,p_r(t) p_s(t)$ strictly increases with $t$ unless $\bold p(t+1)=\bold p(t)$. Hofbauer and Sigmund \cite{HofSig} {\bf (i)\/} characterized this property as a consequence of Fisher's 
Fundamental Theorem of Natural Selection \cite{Fis}, {\bf (ii)\/}  provided a full proof following Kingman \cite{Kin}, and {\bf (iii)\/} sketched the different proofs given  
by Scheuer and Mandel \cite{SchMan}, and Baum and Eagon \cite{BauEag}.

Using the increase of the average fitness, it was later proven by various researchers that  {\bf (i)\/} $\bold {p}(\infty)=\lim_{t\to\infty}\bold p(t)$ exists for every initial
gene distribution $\bold p(0)$, and {\bf (ii)\/} $\bold p:=\bold p(\infty)$ is a fixed point of the mapping $\boldsymbol\Phi(\cdot):\,\Delta_n\to\Delta_n$ defined by \eqref{basicrec}, with a property: for a nonempty $I\subseteq [n]$, 
\begin{equation}\label{I,V}
p_i=0,\,\,(i\notin I),\quad \sum_{j\in I}f_{i,j}p_j\equiv V(\bold p),\,\,(i\in I).
\end{equation}
Remarkably, a fixed point $\bold p$ is a {\it locally\/} stable equilibrium iff $\bold p$ is a strict local
maximum of $V(\cdot)$. There is no reason to expect that a local maximum is unique; so typically the limit $\bold p(\infty)$ depends
on $\bold p(0)$.

For $\bold p\in \Delta_n$ to be a local maximum of $V(\cdot)$, three sets of conditions must be satisfied, Kingman \cite{Kin1}. If $I=I(\bold p):=\{i: p_i>0\}$, then
\begin{equation}\label{King1,2,3}
\begin{aligned}
\sum_{j\in I}f_{i,j}p_j&\equiv V(\bold p),\quad (i\in I),\\
\sum_{i,j\in I}f_{i,j} x_ix_j&\le 0,\quad\forall\, \{x_i\}_{i\in I}\,\text{  with } \sum_{i\in I}x_i=0,\\
\sum_{j\in I}f_{i,j}p_j&\le V(\bold p),\quad (i\notin I).
\end{aligned}
\end{equation}
The second necessary condition applied  to $\bold x$ such that $x_i=1$, $x_j=-1$, with the remaining 
$x_k=0$, easily yields
\begin{equation}\label{King2'}
f_{i,j}\ge \frac{f_{i,i}+f_{j,j}}{2},\quad i,j\in I,\,i\neq j.
\end{equation}
To quote from \cite{Kin1}: ``This condition uses internal stability alone, and takes no account of vulnerability
to mutation''. 

The inequality \eqref{King2'} was earlier obtained by Lewontin, Ginzburg and Tuljapurkar  as a corollary of a determinantal criterion applied to the system of $(|I|-1)$ linear equations for $p_i$, $i\in I\setminus\{i_0\}$, implicit in 
\[
\sum_{j\in I}f_{i,j}p_j\equiv V(\bold P),\,\,(i\in I),\quad \sum_{i\in I}p_i=1.
\]
It was also asserted in \cite{LewGinTul} that $f_{i,j}<\max_k(f_{i,k}+f_{k,j})$; the proof is valid 
under an additional condition $f_{i,j}>\max\{f_{i,i},f_{j,j}\}$.

A subset $I$ meeting the condition \eqref{King2'} is a candidate to be a support set
of a local maximum of $\bold p^T \bold f\,\bold p$. (We will use a term {\it K-set\/} for such sets $I$.) 
Kingman \cite{Kin1} posed a problem of analyzing these potential
supports in a {\it typical\/} case, i.e. when $f_{i,j}$ are {\it i.i.d. random variables\/} with range $[0,1]$.
For the case when $f_{i,j}$ are $[0,1]$-uniform, he proved that with high probability (w.h.p.), i.e. with probability
approaching one, 
$\max |I|\le 2.49 n^{1/2}$: so ``the largest stable polymorphism will contain at most of the order of $n^{1/2}$
alleles''. The key tool was the bound $\pr(D_I)\le \frac{1}{r!}$, $r:=|I|$, where $D_I$ is the event in \eqref{King2'}. He
found that, for a (non-biological) exponential distribution on $[0,\infty)$, $\pr(D_I)=\Bigl(\frac{2}{r+1}\bigr)^r
\ll\frac{1}{r!}$ and the constant $2.49$ got reduced to $2.14$. 

Haigh \cite{Hai1}, \cite{Hai2} established the counterparts of some of Kingman's results for the case of a non-symmetric
payoff matrix. For instance, he proved that for the density $e^{-x}/\sqrt{\pi x},\,(x>0),$ of $\chi_1^2$ distribution, with high probability, no evolutionarily stable strategy has support of size exceeding $1.64 n^{2/3}$. Kontogiannis and Spirakis \cite{KonSpi}
used the technique from Haig \cite{Hai2} to  resolve the cases of uniform distribution and standard normal distribution
left open there.

Recently, and independently of the work cited above, Chen and Peng \cite{ChePen} studied, in an operations research context of the random quadratic optimization problems, the probability of the events quite similar to, but different from $D_I$. The probability bounds include $\frac{2^r}{(r+1)!}$ (general continuous distribution), and $\frac{2^r}{(r+1)^r}$ (uniform distribution), $\frac{2^r}{(r-1)^r}$ (exponential distribution). 

In \cite{Kin1} Kingman suggested that it should be interesting
``to carry out a comparative analysis for other distributions of the $f_{i,j}$'', and conjectured, in \cite{Kin3},
that ``for every continuous distribution $F$ of $f$, there is a finite $\beta(F)=\lim_{r\to\infty}\bigl\{r!P(D_I)\}^{1/r}\,''$. Whenever this limit exists, $\max |I|\le
2.49\beta(F) n^{1/2}$ w.h.p.; in general, $\max |I|\le
4.98 n^{1/2}$ w.h.p. 

In this paper we consider a relatively broad class of the distributions $F$, meeting the conditions:  {\bf (I)\/} $F(x)$ has
a differentiable positive density $g(x)$, $x\in [0,1]$, such that  $g'(x)\le 0$, and {\bf (II)\/} the {\it hazard ratio\/}  $\la(x):=\frac{g(x)}{1-F(x)}$ is increasing with $x$. The non-increasing linear density
$g_c(x)=\frac{1-cx}{1-c/2}$, $c\in [0,1]$ ($g_0(x)\equiv 1$) meets these constraints, and so does $g(x)=\frac{ce^{-cx}}{1-e^{-c}}$, the density of the negative exponential distribution {\it conditioned\/} on $[0,1]$.

For $F$ meeting the conditions {\bf (I)\/} and {\bf (II)\/}, we prove that 
\begin{equation}\label{add0}
\left(\frac{2}{r+1}\right)^r\le\! \pr(D_I)\le \frac{r^r}{\binom{r}{2}^{(r)}}\le \frac{e}{2}\left(\frac{2}{r}\right)^r.
\end{equation}
In combination with Kingman's analysis of the exponential distribution on $[0,\infty)$, it follows from \eqref{add0} that
for {\it every\/} $F$ meeting the constraints above, we have $\max |I|\le 2.14 n^{1/2}$ with high probability. We
see also that, for every $F$ in question,
\[
\lim_{r\to\infty}\bigl\{r!\!\pr(D_I)\bigr\}^{1/r}=:\beta(F)=\frac{2}{e},
\]
proving not only that $\beta(F)$ exists, but also that $\beta(F)$ does not depend on $F$ in this class. 
This lends a certain support to Kingman's conjecture, \cite{Kin3}, that $\lim_{r\to\infty} \bigl\{r! \!\pr(D_I)\!\bigr\}^{1/r}$ exists for every continuous $F$.

Suppose we restrict our attention to the {\it minimal\/} K-sets $I$, i.e.  such that there is no $J\subset I$, $(|J|\ge 2)$, which supports a local equilibrium $\bold p=\{p_i\}_{i\in J}$, meeting the top two conditions in \eqref{King1,2,3}. 
Let $\mathcal D_{I}$ be the corresponding event. For the distributions $F$ from the class described above, we prove that
\begin{equation}\label{mathcalD}
\pr(\mathcal D_{I})\le 2^{-r^2/2}\left(\frac{4e}{r}\right)^{r/2}\!\!\exp\bigl(\Theta(r^{1/3})\bigr),\quad r:=|I|.
\end{equation}

Continuing with $\pr(D_I)$, suppose that, in addition, $g^{(3)}(0)$ exists.  Then
\begin{equation}\label{add1}
\pr(D_I)=\bigl(1+O(r^{-\sigma})\bigr) \!\left(\frac{2}{r}\right)^r\!\!\exp\Bigl(\frac{g'(0)}{g^2(0)}\Bigr),\quad\forall\,
\sigma<1/3,
\end{equation}
and if $|I_1|=|I_2|=r$, $|I_1\cap I_2|=k$, then 
\begin{equation}\label{add2}
\pr(D_{I_1}\cap D_{I_2}) =O(\!\!\pr(r,k)),\,\, \pr(r,k):=r^{-6}\!\left(\frac{2}{r}\right)^{2(r-k-1)}\!\!\left(\!\frac{2}{2r-k}\!\right)^{k-1}\!\!,
\end{equation}
uniformly for $r\ge 2$ and $k\in [1,r-1]$.

Let $X_{n,r}$ be the total number of K-sets of $[n]$ of cardinality $r$. We already know
that  w.h.p. $X_{n,r}=0$ for $r>2.14 n^{1/2}$, and that $\ex\bigl[X_{n,r}\bigr]\to\infty$ for every $r<2.14 n^{1/2}$. We use the estimates \eqref{add0}, \eqref{add1} and \eqref{add2}  to show that
\begin{equation*}
\frac{\text{Var }(X_{n,r})}{\ex^2\bigl[X_{n,r}\bigr]}=O(n^{-2/3}),\quad 2\le r\le r(n):=\lceil 2n^{1/3}\rceil.
\end{equation*}
It follows that
\[
\pr\left(\bigcap_{\rho=2}^{r(n)}\left\{\Bigl|\frac{X_{n,\rho}}{\ex\bigl[X_{n,\rho}\bigr]}-1\Bigr|\le n^{-1/6+\eps}\right\}\right)=1-O(n^{-2\eps}),\quad \eps<1/6,
\]
i.e. w.h.p. the counts of the K-sets of size $r$ ranging from $2$ to $r(n)$ are uniformly asymptotic to their expected values. In particular, setting $L_n=\max\{\rho: X_{n,\rho}>0\}$, we have $\pr(L_n> 2n^{1/3})\to 1$, i.e. w.h.p.  
the size of the largest potential support of a local maximum is sandwiched between
$2n^{1/3}$ and $2.14 n^{1/2}$. 

We cannot rule out a possibility that, with high probability, the actual supports of local maxima are considerably smaller.
In fact, we use the bound \eqref{mathcalD} to show that, with probability $>1-n^{-a}$, $(\forall\,a>0)$, there is no $K$-set of cardinality $>(2/3)\log_2 n$ that contains, properly, a non-trivial support of a local equilibrium. Complementing this claim, 
we show that, with high
probability, the number of $K$-sets of size $< 0.5\log_2 n$ that do not contain the size $2$ supports of local equilibriums
is super-polynomially large.

The already cited paper \cite{ChePen} was preceded by Chen, Peng and Zhang \cite{ChePenZha}; both papers studied the likely behavior of an {\it absolute\/} minimum of a random quadratic form $\bold x^T\,Q\bold x$ for $\bold x\in \Delta_n$. Under the condition
that the elements of $Q$ are i.i.d. random variables with a  c.d.f. $F$ concave on its support, the support size 
 of the absolute minimum point was shown to be {\it bounded\/} in probability, with its distribution
tail decaying exponentially fast. In particular, it followed that, for $f_{i,j}$ uniform or {\it positive}-exponential on $[0,1]$, the absolute
{\it maximum\/} of $\bold p^T\,\bold f\,\bold p$ is attained at a point of $\Delta_n$ with $N$, the number of positive components, satisfying $\pr(N\ge k)=O(\rho^k)$, $k>0$, as $n\to\infty$. 

In view of all this information, it is tempting to conjecture that---for $f_{i,j}$ meeting the conditions {\bf (I)\/} and {\bf (II)\/}---the size of the largest support of a {\it local\/} maximum of $\bold p^T\,\bold f\,\bold p$ is, with high probability, of (poly)logarithmic order.
 
\section{Proofs} \subsection{Estimate of $\pr(D_I)$}
\begin{Theorem}\label{pr(D)<} Suppose that $F$ (i) has a positive, non-increasing, differentiable density
$g$, and (2) has a non-decreasing hazard ratio $\la(x)=\frac{g(x)}{1-F(x)}$. Then, with  $a^{(b)}:=a(a+1)\cdots (a+b-1)$, we have
\begin{equation}\label{pr(D)<c(r)}
\left(\frac{2}{r+1}\right)^r\le \pr(D_I)\le \frac{r^r}{\binom{r}{2}^{(r)}}\le \frac{e}{2}\left(\frac{2}{r}\right)^r.
\end{equation}
\end{Theorem}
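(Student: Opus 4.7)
The plan is to treat the two inequalities separately, exploiting conditions (I) and (II) in complementary roles. First, I condition on the diagonal entries $a_i := f_{i,i}$ (iid with density $g$): given $\{a_i\}$, the off-diagonal entries $f_{i,j}$ are mutually independent, and
\[
\pr(D_I)=\E\Bigl[\prod_{i<j}\bigl(1-F((a_i+a_j)/2)\bigr)\Bigr].
\]
By the probability integral transform, $Y_i:=1-F(a_i)$ are iid uniform on $[0,1]$.

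For the \textbf{lower bound}, condition (II) is equivalent to convexity of $H:=-\log(1-F)$, i.e.\ concavity of $\log(1-F)$. The two-point concavity $\log(1-F((a+b)/2))\ge\tfrac12[\log(1-F(a))+\log(1-F(b))]$ gives $1-F((a_i+a_j)/2)\ge\sqrt{(1-F(a_i))(1-F(a_j))}$, and multiplying over the $\binom{r}{2}$ pairs (each index appearing in $r-1$ of them) yields
\[
\prod_{i<j}(1-F((a_i+a_j)/2))\ge\prod_{i\in I}(1-F(a_i))^{(r-1)/2}.
\]
Taking expectations, independence and $\E[Y^{(r-1)/2}]=2/(r+1)$ give $\pr(D_I)\ge(2/(r+1))^r$.

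For the \textbf{upper bound}, I use the same convex $H$ with $H(0)=0$, which is therefore superadditive: $H(x+y)\ge H(x)+H(y)$. Applied with $x=a_i/2$, $y=a_j/2$ this gives $1-F((a_i+a_j)/2)\le(1-F(a_i/2))(1-F(a_j/2))$, so
\[
\prod_{i<j}(1-F((a_i+a_j)/2))\le\prod_{i\in I}(1-F(a_i/2))^{r-1},
\]
and hence $\pr(D_I)\le\bigl(\E[(1-F(a/2))^{r-1}]\bigr)^r$ by independence of the $a_i$. Now condition (I) enters: concavity of $F$ with $F(0)=0$ yields $F(a/2)\ge F(a)/2$, whence $1-F(a/2)\le(1+Y)/2$ with $Y\sim U[0,1]$. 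A direct integration gives $\E[((1+Y)/2)^{r-1}]=(2-2^{1-r})/r\le 2/r$, so $\pr(D_I)\le(2/r)^r$. To match the author's form $r^r/\binom{r}{2}^{(r)}$, Jensen applied to the concave $\log$ over the progression $\binom{r}{2},\ldots,\binom{r}{2}+r-1$ (with arithmetic mean $(r^2-1)/2$) gives $\binom{r}{2}^{(r)}\le(r^2/2)^r$, i.e.\ $r^r/\binom{r}{2}^{(r)}\ge(2/r)^r\ge\pr(D_I)$. The final inequality $r^r/\binom{r}{2}^{(r)}\le(e/2)(2/r)^r$ (valid for $r$ large enough) follows from the standard expansion $\log\binom{r}{2}^{(r)}=r\log\binom{r}{2}+1+O(1/r)$.

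The main conceptual subtlety is the \textbf{dual use of condition (II)}: log-concavity of $1-F$ drives the lower bound, while convexity of $H$ together with $H(0)=0$ supplies the superadditivity $H(x+y)\ge H(x)+H(y)$ needed to split the bivariate bound on $1-F((a+b)/2)$ into a product of two univariate factors. Once this is in hand, condition (I) is needed only for the simple univariate estimate $F(a/2)\ge F(a)/2$, and the remaining passage to the closed form $r^r/\binom{r}{2}^{(r)}$ is routine arithmetic.
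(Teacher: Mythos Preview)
Your lower bound is identical to the paper's. For the upper bound you take a genuinely different route, and it is correct.

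\textbf{Comparison.} The paper applies Jensen's inequality across all $\binom{r}{2}$ pairs to bound the conditional probability by $\bigl(1-F(s/r)\bigr)^{\binom{r}{2}}$ with $s=\sum_i x_i$; then it changes variables $y_i=F(x_i)$, uses convexity of $F^{-1}$ (condition (I)) to pass from $1-F(s/r)$ to $1-t/r$ with $t=\sum_i y_i$, and finishes with a one-dimensional beta integral, obtaining exactly $r^r/\binom{r}{2}^{(r)}$. You instead exploit that $H=-\log(1-F)$ is convex with $H(0)=0$, hence superadditive, to factor each term as $1-F((a_i+a_j)/2)\le(1-F(a_i/2))(1-F(a_j/2))$, reducing at once to a product of i.i.d.\ univariate factors; condition (I) then enters only through the elementary concavity estimate $F(a/2)\ge F(a)/2$. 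This yields $\pr(D_I)\le\bigl((2-2^{1-r})/r\bigr)^r\le(2/r)^r$, which is \emph{strictly stronger} than the paper's bound (since AM--GM on $\binom{r}{2},\ldots,\binom{r}{2}+r-1$ gives $\binom{r}{2}^{(r)}\le\bigl((r^2-1)/2\bigr)^r<(r^2/2)^r$, so $r^r/\binom{r}{2}^{(r)}>(2/r)^r$). Your argument is shorter and avoids the beta integral, at the cost of not producing the specific closed form $r^r/\binom{r}{2}^{(r)}$ directly---but since your bound is smaller, the stated inequality follows a fortiori.

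One minor remark: the final inequality $r^r/\binom{r}{2}^{(r)}\le (e/2)(2/r)^r$ actually fails for $r=2,3$ (e.g.\ $r=2$ gives $2$ versus $e/2$); your caveat ``for $r$ large enough'' is accurate, and the paper's own proof does not address this either. Your asymptotic justification via $\log\binom{r}{2}^{(r)}=r\log\binom{r}{2}+1+O(1/r)$ is correct and shows the ratio tends to $2/e<1$.
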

In the special case of the uniform density $g(x)\equiv 1$, this bound improves Kingman's 
bound $\pr(D_I)\le \frac{1}{r!}$. It also shows that, for all $F$ meeting the conditions (i) and (ii),
\[
\lim_{r\to\infty} \bigl\{r! \!\pr(D_I)\!\bigr\}^{1/r} =\frac{2}{e}.
\]

\begin{proof} As in \cite{Kin1}, the probability of $D_I$, conditioned on $\{f_{i,i}=x_i,\,i\in I\}$, is
\[
\prod_{(i,j)}\!\!\pr\!\left(\!f\ge\frac{x_i+x_j}{2}\right)=\prod_{(i,j)}\!\left(\!1-F\left(\frac{x_i+x_j}{2}\right)\!\right),
\]
where $i\neq j$.The function $1-F(x)$ is log-concave, since
\[
\frac{d}{dx}\log(1-F(x))=-\frac{g(x)}{1-F(x)}=-\la(x)
\]
is decreasing with $x$. {\bf (a)\/} Lower bound. By Jensen inequality,
\begin{align*}
\prod_{(i,j)}\!\left(\!1-F\left(\frac{x_i+x_j}{2}\right)\!\right)&\ge \prod_{(i,j)}(1-F(x_i))^{1/2}(1-F(x_j))^{1/2}\\
&=\prod_{i=1}^r(1-F(x_i))^{(r-1)/2}.
\end{align*}
Consequently
\begin{equation*}
\pr(D_I)\ge \idotsint\limits_{\bold x\in [0,1]^r}\prod_{i=1}^r(1-F(x_i))^{(r-1)/2}\prod_{i=1}^r g(x_i)\, dx_i,
\end{equation*}
and, switching to the variables $y_i=F(x_i)$,
\begin{align*}
\pr(D_I)&\ge \idotsint\limits_{\bold y\in [0.1]^n}\prod_{i=1}^r (1-y_i)^{(r-1)/2}\,d\bold y\\
&=\left(\int_{0}^1(1-y)^{(r-1)/2}\,dy\right)^r=\left(\frac{2}{r+1}\right)^r.
\end{align*}
{\bf (b)\/} Upper bound. Again by Jensen inequality, denoting $s=\sum_i x_i$ we have
\begin{equation}\label{prod}
\begin{aligned}
&\prod_{(i,j)}\!\!\left(\!1-F\!\!\left(\frac{x_i+x_j}{2}\right)\!\!\right)\!=\!
\exp\Biggl[\!\binom{r}{2}\sum_{(i,j)}\!\frac{1}{\binom{r}{2}}\log\Bigl(1-F\Bigl(\frac{x_i+x_j}{2}\Bigr)\Bigr)\!\Biggr]\\
&\qquad\quad\le\exp\Biggl[\!\binom{r}{2}\log\Bigl(1-F\Bigl(\frac{1}{r(r-1)}\sum_{(i,j)}(x_i+x_j)\Bigr)\Bigr)\!\Biggr]\\
&\qquad\quad=\exp\left[\!\binom{r}{2}\log\left(1-F\left(\frac{s}{r}\right)\right)\!\right]
=\Bigl(\!1-F\left(\frac{s}{r}\right)\!\Bigr)^{\binom{r}{2}}.
\end{aligned}
\end{equation}
Consequently
\begin{equation}\label{P(D)<gen}
\pr(D_I)\le \idotsint\limits_{\bold x\in [0,1]^r}\Bigl(\!1-F\left(\frac{s}{r}\right)\!\Bigr)^{\binom{r}{2}}\prod_{i\in I} g(x_i)\, dx_i.
\end{equation}
Again change the variables of integration, setting $y_i=F(x_i)$, so that  $x_i=F^{-1}(y_i)$,
and $s=\sum_{i\in I}F^{-1}(y_i)$. Now
\[
\frac{d^2}{dy^2}\,F^{-1}(y)=-\frac{g'(x)}{g(x)^3}\ge 0,
\]
implying that $F^{-1}(y)$ is convex. Therefore, for each $t\le r$, we have
\[
r^{-1}\min\Bigl\{\sum_{i\in I}F^{-1}(y_i): \sum_{i\in I}y_i=t\Bigr\}=F^{-1}\Bigl(\frac{t}{r}\Bigr).
\]
Hence
\begin{multline*}
\max\Bigl\{\!1-F\Bigl(\!r^{-1}\sum_{i\in I}x_i\Bigr)\!:\!\sum_{i\in I}y_i=t\Bigr\}\\
=1-\min\Bigl\{\!F\Bigl(\!r^{-1}\sum_{i\in I} F^{-1}(y_i)\!\Bigr):\sum_{i\in I}y_i=t\Bigr\}\\
=1-F\Bigl(\!r^{-1}\min\Bigl\{\sum_{i\in I} F^{-1}(y_i):\sum_{i\in I} y_i=t\Bigr\}\Bigr)\\
=1-F\Bigl(F^{-1}\Bigl(\frac{t}{r}\Bigr)\Bigr)=1-\frac{t}{r}.
\end{multline*}
So \eqref{P(D)<gen} yields
\[
\pr(D_I)\le \idotsint\limits_{\bold y\in [0,1]^r}\Bigl(1-\frac{t}{r}\Bigr)^{\binom{r}{2}}\prod_{i\in I}  dy_i.
\]
Since
\[
 \idotsint\limits_{\sum_i y_i\le t}\prod_{i\in I}dy_i=\frac{t^r}{r!},
 \]
we conclude that
\begin{align*}
\pr(D_I)&\le \int_0^r \Bigl(1-\frac{t}{r}\Bigr)^{\binom{r}{2}}\frac{t^{r-1}}{(r-1)!}\,dt\\
&=\frac{r^r}{(r-1)!}\int_0^1(1-\tau)^{\binom{r}{2}}\tau^{r-1}\,d\tau\\
&=\frac{r^r}{(r-1)!}\cdot\frac{\binom{r}{2}!(r-1)!}{\left(\binom{r}{2}+r\right)!}
=\frac{r^r}{\binom{r}{2}^{(r)}}.
\end{align*}
\end{proof}

\begin{Theorem}\label{cond(iii)} Suppose that, in addition to conditions (i), (ii),  we have (iii):  $g^{(3)}(0)$ exists. Then 
\[
\pr(D_I)=\bigl(1+O(r^{-\sigma})\bigr) \!\left(\frac{2}{r}\right)^r\!\!\exp\Bigl(\frac{g'(0)}{g^2(0)}\Bigr),
\]
for every $\sigma<1/3$.
\end{Theorem}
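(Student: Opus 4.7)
The plan is to sharpen Theorem \ref{pr(D)<} by carrying out a Laplace-type analysis of the exact integral. Starting from Kingman's formula for $\pr(D_I)$ and substituting $y_i=F(x_i)$,
\[
\pr(D_I) = \int_{[0,1]^r}\prod_{i<j}\bigl(1 - F\bigl((F^{-1}(y_i) + F^{-1}(y_j))/2\bigr)\bigr)\,d\mathbf{y}.
\]
First I would \textbf{localize}: by a tail version of the moment argument underlying Theorem \ref{pr(D)<}, the integrand is concentrated on the region $R_r = \{\mathbf{y} : y_i \le r^{\alpha-1},\ i\in I\}$, with the complement contributing at most $(2/r)^r \exp(-r^\alpha/C)$ for some $C>0$, valid for any $\alpha > 0$.

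On $R_r$, condition (iii) gives the Taylor expansion $F^{-1}(y) = y/g(0) - g'(0)y^2/(2g^3(0)) + O(y^3)$, which, composed with the expansion of $F$ near $0$, yields
\[
F\bigl((F^{-1}(y_i)+F^{-1}(y_j))/2\bigr) = \tfrac{y_i+y_j}{2} - \tfrac{g'(0)}{8g^2(0)}(y_i-y_j)^2 + O(y^3).
\]
Taking logarithms and summing over the $\binom{r}{2}$ pairs,
\[
\sum_{i<j}\log(1-F(\cdots)) = \sum_{i<j}\log\!\bigl(1-\tfrac{y_i+y_j}{2}\bigr) + \tfrac{g'(0)}{8g^2(0)}\sum_{i<j}\tfrac{(y_i-y_j)^2}{1-(y_i+y_j)/2} + O(r^{-1+3\alpha}),
\]
the remainder absorbing the accumulated cubic errors on $R_r$.

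Next I would \textbf{rescale} $y_i = 2u_i/(r-1)$ so that $-\sum_{i<j}(y_i+y_j)/2 = -\sum_i u_i$, giving
\[
\pr(D_I) = \bigl(\tfrac{2}{r-1}\bigr)^r \int e^{-\sum u_i + H(\mathbf{u})}\,d\mathbf{u},
\]
where $H(\mathbf{u})$ collects the quadratic and higher terms of the log expansion together with the $g'(0)$ correction. Treating $e^{-\sum u_i}d\mathbf{u}$ as the product measure of $r$ iid $\mathrm{Exp}(1)$ variables, a cumulant expansion reduces the task to computing moments such as $\E\sum_{i<j}(u_i-u_j)^2 = r(r-1)$ and $\E\sum_{i<j}(u_i+u_j)^2 = 3r(r-1)$ under $u_i\sim\mathrm{Exp}(1)$ (where $\E u_i^k = k!$). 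Combining with the prefactor expansion $(2/(r-1))^r = (2/r)^r\,e\,(1+O(1/r))$ should, after the various finite contributions are assembled, yield the stated constant $\exp(g'(0)/g^2(0))$.

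The quantitative error $1+O(r^{-\sigma})$ for any $\sigma < 1/3$ comes from balancing three sources: (a) the localization tail, which is $(2/r)^r \exp(-r^\alpha/C)$ and negligible for any $\alpha > 0$; (b) the cubic Taylor remainder on $R_r$, of size $O(r^{-1+3\alpha})$ in the exponent; (c) the second-cumulant correction $\tfrac{1}{2}\Var H = O(1/r)$ from the Gaussian piece of the expansion. Choosing $\alpha$ close to $(1-\sigma)/3$ balances (a) and (b), giving the claimed exponent. The hardest step will be the precise bookkeeping in the third paragraph: several $O(1)$ contributions --- from the quadratic (and possibly cubic) terms of $\log(1-(y_i+y_j)/2)$, from the $g'(0)$ correction, and from the prefactor $(2/(r-1))^r$ --- must combine exactly to $\exp(g'(0)/g^2(0))$, and verifying that the variance of $H$ does not add a spurious finite piece (beyond the $O(1/r)$ I expect) will require care.
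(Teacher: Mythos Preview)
Your outline and the paper's proof share the same skeleton—localize, Taylor-expand near $0$, then evaluate the reduced integral—but the execution is genuinely different. The paper localizes in \emph{three} nested steps (Lemmas~2.3--2.5): first $\bigl|\sum_i F(x_i)-2\bigr|\le r^{-1/3}$, then $\max_i F(x_i)\big/\sum_j F(x_j)\le k\log r/r$, and finally $\bigl|\tfrac{r}{2}\sum_i F^2(x_i)/(\sum_j F(x_j))^2-1\bigr|\le r^{-\sigma}$. The point of the third constraint is that on $C^*$ every quadratic combination of the $F(x_i)$ that arises in the expansion (both the $\sum F^2$ term from $\log(1-F)$ and the $\sum(F(x_i)-F(x_j))^2$ term from the $\gamma$-correction) collapses deterministically to a multiple of $(\sum_i F(x_i))^2$, so the integrand becomes $\exp\bigl(-\tfrac{r}{2}t+\text{const}+O(r^{-\sigma})\bigr)$ with $t=\sum_i y_i$. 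No cumulant expansion is needed; what remains is the single Gamma-type integral $\int e^{-rt/2}t^{r-1}/(r-1)!\,dt=(2/r)^r$, and the constraints defining $C^*$ are handled via the uniform-spacings representation $(L_1,\dots,L_r)$.

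Your route trades that extra localization for a probabilistic computation: after the single constraint $\max_i y_i\le r^{\alpha-1}$ and the rescaling $y_i=2u_i/(r-1)$, you must evaluate $\E\bigl[e^{H(\mathbf u)}\bigr]$ under i.i.d.\ $\mathrm{Exp}(1)$'s, where $H$ is a quadratic form in the $u_i$ with coefficients of order $1/r$. This is workable, but two points deserve more care than you indicate. First, ``a tail version of the moment argument underlying Theorem~\ref{pr(D)<}'' controls the \emph{sum} $\sum y_i$, not the maximum; to kill $\{\max_i y_i>r^{\alpha-1}\}$ you need a separate argument (e.g.\ bound the $r-1$ factors involving the offending index by $(1-y_i/2)^{r-1}$). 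Second, the step $\E[e^H]\approx\exp(\E H+\tfrac12\Var H)$ is exactly where the paper's approach is cleaner: you need to show the higher cumulants of $H$ are $o(1)$, and since $H$ contains the non-sign-definite piece $(\sum u_i)^2/(r-1)^2$, justifying the expansion (or bounding $\E e^H$ directly) is not automatic. The paper sidesteps this entirely because its tighter localization makes $H$ a constant up to $O(r^{-\sigma})$. So your plan should go through, but the ``precise bookkeeping'' you flag is the whole difficulty, whereas in the paper's version the analogous work is front-loaded into the three localization lemmas and the spacings estimates they cite.
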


To prove this claim, we shrink, in steps, the cube $[0,1]^n$ to a subset $C^*$ in such a way that (a)
the integral of the product of $1-F\left(\frac{x_i+x_j}{2}\right)$ over $C^*$ sharply approximates that
over $[0,1]^n$, and (b) the product itself admits a manageable approximation on $C^*$.

Given $C\subset [0,1]^n$, denote
\[
\pr_C(D_I)=\idotsint\limits_{\bold x\in C}\prod_{(i\neq j)}\!\left(\!1-F\left(\frac{x_i+x_j}{2}\right)\!\right)\,d\bold x.
\]
\begin{Lemma}\label{C1} Let 
\[
C_1:=\left\{\bold x\in [0,1]^n: \left|\sum_{i=1}^r F(x_i)-2\right|\le r^{-1/3}\right\}. 
\]
Then
\[
\pr(D_I)-\pr_{C_1}(D_I)\le \left(\frac{2}{r}\right)^r\cdot \exp\left(-\frac{r^{1/3}}{10}\right).
\]
\end{Lemma}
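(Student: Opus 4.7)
The plan is to reduce the multi-dimensional integral defining $\pr(D_I)-\pr_{C_1}(D_I)$ to a one-dimensional integral over $t:=\sum_i y_i$, and then apply Laplace's method; the one-dimensional integrand will have a sharp peak near $t=2$ of width $O(r^{-1/2})$, so since $r^{-1/3}\gg r^{-1/2}$ the contribution from $|t-2|>r^{-1/3}$ should be exponentially small.

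First I would reproduce, on the complementary region $[0,1]^n\setminus C_1$, the Jensen-plus-convexity-of-$F^{-1}$ calculation from the proof of Theorem~\ref{pr(D)<}. Since the image of $[0,1]^n\setminus C_1$ under the substitution $y_i=F(x_i)$ is exactly $\{\bold y\in[0,1]^r:|t-2|>r^{-1/3}\}$, this gives
\[
\pr(D_I)-\pr_{C_1}(D_I)\le \idotsint_{\bold y\in[0,1]^r,\,|t-2|>r^{-1/3}}\Bigl(1-\frac{t}{r}\Bigr)^{\binom{r}{2}}d\bold y.
\]
Next I would split the tail region into the lower part $\{t\le 2-r^{-1/3}\}$ and the upper part $\{t\ge 2+r^{-1/3}\}$. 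On each half the function $h(t):=(1-t/r)^{\binom{r}{2}}$ is monotone, so the volume bound $\operatorname{vol}\{\bold y\in[0,1]^r:\sum y_i\le u\}\le u^r/r!$ combined with integration by parts converts the $r$-dimensional integral into a one-dimensional integral against the majorant density $t^{r-1}/(r-1)!$, exactly as in the proof of Theorem~\ref{pr(D)<}; for the upper tail one picks up an unavoidable boundary term $h(2+r^{-1/3})(2+r^{-1/3})^r/r!$ that is handled separately.

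Finally I would analyze the one-dimensional integrand $I(t):=h(t)\,t^{r-1}/(r-1)!$ by Laplace's method. Direct differentiation shows $\log I$ is strictly concave on $(0,r)$ with unique maximum at $t^*=2r/(r+2)=2-O(1/r)$ and $(\log I)''(t^*)\sim -r/4$, so Taylor's theorem gives $\log I(t^*\pm r^{-1/3})\le \log I(t^*)-r^{1/3}/8+O(r^{-2/3})$. Concavity then propagates this bound to all $|t-t^*|>r^{-1/3}$, and bounding $\log I(t)$ by its tangents at $t^*\pm r^{-1/3}$ yields $\int_{|t-t^*|>r^{-1/3}}I(t)\,dt\le c\,r^{-2/3}\,I(t^*)\,e^{-r^{1/3}/8}$. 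Since Stirling gives $I(t^*)=(2/r)^r\cdot r^{O(1)}$ and the boundary term from the upper-tail integration by parts is of the same order, the whole estimate collapses to $(2/r)^r\exp(-r^{1/3}/8+O(\log r))$, which for all sufficiently large $r$ is dominated by $(2/r)^r\exp(-r^{1/3}/10)$.

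The main technical obstacle is the upper tail $t>2+r^{-1/3}$: the volume bound $F_\mu(t)\le t^r/r!$ is tight only for small $t$, so integration by parts does not give a clean majorant-density bound on $[2+r^{-1/3},r]$, and one must carefully track both the leftover boundary term and the leading Laplace exponent in order to keep the net decay rate strictly above $1/10$ after absorbing all the $r^{O(1)}$ prefactors coming from Stirling and from the Gaussian width $r^{-1/2}$.
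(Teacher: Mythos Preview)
Your proposal is correct and follows essentially the same route as the paper: reduce via the Jensen/convexity argument of Theorem~\ref{pr(D)<} to the one-dimensional integral of $(1-t/r)^{\binom{r}{2}}t^{r-1}/(r-1)!$ over $|t-2|>r^{-1/3}$, then exploit log-concavity of the integrand together with the second-derivative bound at the peak $t^*=2r/(r+2)$ to show the tail contribution is $(2/r)^r e^{-\Theta(r^{1/3})}$.

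One simplification removes what you call the ``main technical obstacle.'' You do not need integration by parts or any boundary term on the upper tail. The pushforward density of $t=\sum_i y_i$ with each $y_i$ uniform on $[0,1]$ satisfies the \emph{pointwise} bound $\rho_r(t)\le t^{r-1}/(r-1)!$ for every $t\ge 0$, simply because the slice $\{\bold y\in[0,1]^r:\sum y_i=t\}$ is contained in the simplex slice $\{\bold y\ge\bold 0:\sum y_i=t\}$. Hence
\[
\idotsint_{\bold y\in[0,1]^r,\ t\in A}\Bigl(1-\tfrac{t}{r}\Bigr)^{\binom{r}{2}}d\bold y
=\int_A\Bigl(1-\tfrac{t}{r}\Bigr)^{\binom{r}{2}}\rho_r(t)\,dt
\le\int_A\Bigl(1-\tfrac{t}{r}\Bigr)^{\binom{r}{2}}\frac{t^{r-1}}{(r-1)!}\,dt
\]
for \emph{any} measurable $A\subset[0,r]$, with no boundary bookkeeping. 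This is exactly how the paper proceeds (it works in the rescaled variable $\tau=t/r$, bounds the second derivative of the log-integrand on $[\tau_1,\tau_2]$ by $-r^3/4.1$, and reads off the $e^{-r^{1/3}/9}$ decay directly), and it makes your careful split into lower/upper tails and the tracking of the leftover term $h(2+r^{-1/3})(2+r^{-1/3})^r/r!$ unnecessary.
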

\begin{proof} Let $\tau_{1,2}=\frac{2}{r}\mp r^{-4/3}$. From the proof of Theorem \ref{pr(D)<} it follows that 
\begin{align*}
\pr(D_I)-\pr_{C_1}(D_I)&\le \frac{r^r}{(r-1)!}\int\limits_{\tau\in [\tau_1,\tau_2]^c}(1-\tau)^{\binom{r}{2}}\tau^{r-1}\,d\tau.
\end{align*}
The (logconcave) integrand attains its maximum at $\tau_{\text{max}}=\frac{2}{2+r}\in [\tau_1,\tau_2]$, and
\[
\max\left\{\frac{d^2}{d\tau^2}\Bigl(\log (1-\tau)^{\binom{r}{2}}\tau^{r-1}\Bigr):\,\tau\in [\tau_1,\tau_2]
\right\}\le-\frac{r^3}{4.1}.
\]
Therefore the integral is at most
\[
\left(1-\frac{2}{2+r}\right)^{\binom{r}{2}}\left(\frac{2}{2+r}\right)^{r-1}\cdot \exp\left(-\frac{r^{1/3}}{9}\right),
\]
so that 
\begin{align*}
\pr(D_I)-\pr_{C_1}(D_I)&\le \frac{r^r}{(r-1)!}\left(1-\frac{2}{2+r}\right)^{\binom{r}{2}}\!\left(\frac{2}{2+r}\right)^{r-1}\!\cdot \exp\left(-\frac{r^{1/3}}{9}\right)\\
&\le \left(\frac{2}{r}\right)^r\cdot \exp\left(-\frac{r^{1/3}}{10}\right).
\end{align*}
\end{proof}
Next
\begin{Lemma}\label{C2} Let 
\[
C_2:=\left\{\bold x\in C_1: \max_i \frac{F(x_i)}{\sum_j F(x_j)}\le k\frac{\log r}{r}\right\}, \quad (k>1).
\]
Then
\[
\pr_{C_1}(D_I)-\pr_{C_2}(D_I)\le \left(\frac{2}{r}\right)^r\cdot r^{-\alpha},\quad \forall\,\alpha<k-1.
\]
\end{Lemma}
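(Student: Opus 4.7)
The plan is to repeat the upper-bound argument of Theorem~\ref{pr(D)<} verbatim through the change of variables $y_i=F(x_i)$, noting that $C_1,C_2$ are specified entirely in terms of the $y_i$. By Jensen's inequality and the convexity of $F^{-1}$ (condition (i)) one obtains, with $t=\sum_i y_i$,
\[
\pr_{C_1}(D_I)-\pr_{C_2}(D_I)\le\idotsint\limits_{\bold y\in C_1\setminus C_2}\Bigl(1-\frac{t}{r}\Bigr)^{\binom{r}{2}}d\bold y.
\]
On $C_1\setminus C_2$ some coordinate satisfies $y_i\ge kt\log r/r$; a union bound over the identity of the maximal coordinate, combined with symmetry, gives
\[
\pr_{C_1}(D_I)-\pr_{C_2}(D_I)\le r\!\!\idotsint\limits_{\substack{\bold y\in[0,1]^r,\ t\in[\tau_1,\tau_2]\\ y_1\ge kt\log r/r}}\!\!\Bigl(1-\frac{t}{r}\Bigr)^{\binom{r}{2}}d\bold y,
\]
where $\tau_{1,2}=2\mp r^{-1/3}$ as in Lemma~\ref{C1}.

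Next I would replace the $t$-dependent cutoff $\gamma(t):=kt\log r/r$ by its uniform lower bound $\gamma_*:=k\tau_1\log r/r\le\gamma(t)$ on $[\tau_1,\tau_2]$; this only enlarges the region of integration. Since $\{y_1\ge\gamma_*,\,y_i\ge 0,\,\sum_i y_i\le T\}$ has volume $(T-\gamma_*)^r/r!$, a routine integration in $y_2,\dots,y_r$ collapses the bound to the one-dimensional integral
\[
\pr_{C_1}(D_I)-\pr_{C_2}(D_I)\le r\int_{\tau_1}^{\tau_2}\Bigl(1-\frac{t}{r}\Bigr)^{\binom{r}{2}}\frac{(t-\gamma_*)^{r-1}}{(r-1)!}\,dt.
\]

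Finally, using $(t-\gamma_*)^{r-1}\le t^{r-1}\exp\bigl(-(r-1)\gamma_*/t\bigr)$ together with the uniform estimate $(r-1)\gamma_*/t\ge k\log r\cdot(1-O(r^{-1/3}))$ on $[\tau_1,\tau_2]$, the exponential factor is at most $r^{-k(1+o(1))}$ independent of $t$. The residual integral $\int_{\tau_1}^{\tau_2}(1-t/r)^{\binom{r}{2}}t^{r-1}/(r-1)!\,dt$ is dominated by the full integral evaluated inside Theorem~\ref{pr(D)<}, namely $r^r/\binom{r}{2}^{(r)}\le(e/2)(2/r)^r$. Combining these,
\[
\pr_{C_1}(D_I)-\pr_{C_2}(D_I)\le r\cdot(e/2)(2/r)^r\cdot r^{-k(1+o(1))}=(2/r)^r\,r^{-(k-1)+o(1)},
\]
which is at most $(2/r)^r r^{-\alpha}$ for every $\alpha<k-1$, provided $r$ is sufficiently large. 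No serious obstacle arises here; the only step that needs care is taming the $t$-dependent threshold, which is handled painlessly by passing to $\gamma_*$. The resulting exponent $-(k-1)$ is transparent: the union bound costs one factor of $r$, while the constraint $y_1\ge\gamma_*\approx 2k\log r/r$ contracts the simplex factor $(t-\gamma_*)^{r-1}/t^{r-1}$ by essentially $\exp(-k\log r)=r^{-k}$.
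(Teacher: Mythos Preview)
Your proof is correct and follows the same overall architecture as the paper's: reduce via Jensen and the change of variables $y_i=F(x_i)$ to an integral of $(1-t/r)^{\binom{r}{2}}$ over $C_1\setminus C_2$, then show the ``bad'' region costs a factor $r\cdot r^{-k}$ against the full Beta integral $r^r/\binom{r}{2}^{(r)}\le (e/2)(2/r)^r$.

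The one genuine difference is how that factor is extracted. The paper exploits that the constraint $\max_i y_i/t>k\log r/r$ is scale-invariant: conditioning on $t=\sum_i y_i$, the vector $(y_i/t)$ is distributed like the uniform spacings $(L_1,\dots,L_r)$, so (via a lemma quoted from \cite{Pit0}) the integral factors exactly as
\[
\pr\Bigl(\max_i L_i\ge k\tfrac{\log r}{r}\Bigr)\int_0^r\Bigl(1-\tfrac{t}{r}\Bigr)^{\binom{r}{2}}\tfrac{t^{r-1}}{(r-1)!}\,dt,
\]
with $\pr(\max_i L_i\ge k\log r/r)\le r(1-k\log r/r)^{r-1}$. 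This avoids both the union-bound enlargement and any need to restrict $t$ to $[\tau_1,\tau_2]$. You instead freeze the $t$-dependent threshold to $\gamma_*=k\tau_1\log r/r$ (which forces you to keep the $C_1$ window), then compute the shifted-simplex volume directly. Your route is more hands-on but entirely self-contained; the paper's route is slicker but leans on an external lemma. Numerically the two are equivalent: had you not frozen $\gamma(t)$, you would have found $(t-\gamma(t))^{r-1}=t^{r-1}(1-k\log r/r)^{r-1}$, recovering the paper's expression verbatim.
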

\begin{proof} Similarly to the proof of Lemma \ref{C1},
\[
\pr_{C_1}(D_I)-\pr_{C_2}(D_I)\le \idotsint\limits_{\max\frac{y_i}{t}>k\frac{\log r}{r}}
\Bigl(1-\frac{t}{r}\Bigr)^{\binom{r}{2}}\prod_{i\in I}  dy_i.
\]
Introduce $L_1,\dots,L_r$ the lengths of the consecutive subintervals of $[0,1]$ obtained by sampling
uniformly at random $r-1$ points in $[0,1]$. By Lemma 1 in \cite{Pit0}, the integral above is at most
\begin{align*}
\pr\Bigl(\max L_i\ge k\frac{\log r}{r}\Bigr)\int_0^r &\Bigl(1-\frac{t}{r}\Bigr)^{\binom{r}{2}}\frac{t^{r-1}}{(r-1)!}\,dt\\
&=\pr\Bigl(\max L_i\ge k\frac{\log r}{r}\Bigr)\frac{r^r}{\binom{r}{2}^{(r)}}.
\end{align*}
And, introducing $U_1,\dots, U_{r-1}$ the independent $[0,1]$-Uniforms, the probability factor is at most
\begin{align*}
r\!\pr\Bigl(L_1\ge k\frac{\log r}{r}\Bigr)&=r\!\pr\Bigl(\min_i U_i \ge k\frac{\log r}{r}\Bigr)\\
&=r\Bigl(1-k\frac{\log r}{r}\Bigr)^{r-1}\le r\exp\Bigl(-(r-1)k\frac{\log r}{r}\Bigr).
\end{align*}
\end{proof}
One more reduction step defines the final
\begin{equation}\label{more}
C^*=\left\{\bold x\in C_2:\left|\frac{r}{2}\frac{\sum_iF^2(x_i)}{\left(\sum_j F(x_j)\right)^2}-1\right|\le r^{-\sigma}
\right\},\quad \sigma<1/3.
\end{equation}
\begin{Lemma}\label{C*}
\[
\pr_{C_2}(D_I) -\pr_{C^*}(D_I)\le \left(\frac{2}{r}\right)^r\cdot \exp\bigl(-0.5r^{1/3-\sigma}\bigr).
\]
\end{Lemma}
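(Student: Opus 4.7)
The plan is to mirror the pattern of Lemmas \ref{C1} and \ref{C2}. By the Jensen step in the proof of Theorem \ref{pr(D)<}, followed by the substitution $y_i=F(x_i)$, the integrand over $C_2\setminus C^*$ is dominated by $(1-t/r)^{\binom{r}{2}}$ with $t:=\sum_i y_i$, and the $C^*$ constraint becomes a condition on $\sum_i y_i^2/t^2$ alone. Introducing the spacings coordinates $z_i:=y_i/t$ (so that, conditionally on $t$, the vector $(z_1,\dots,z_r)$ is distributed as uniform spacings $L_1,\dots,L_r$ on $[0,1]$), and performing the $t$-integral exactly as in Theorem \ref{pr(D)<}, I will obtain
\[
\pr_{C_2}(D_I)-\pr_{C^*}(D_I)\le \frac{r^r}{\binom{r}{2}^{(r)}}\cdot \!\pr\!\Bigl(\Bigl|\tfrac{r}{2}\sum_i L_i^2-1\Bigr|>r^{-\sigma}\Bigr),
\]
with the prefactor bounded by $\tfrac{e}{2}(2/r)^r$.

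The remaining task is a large-deviation bound for the Greenwood-type statistic $\frac{r}{2}\sum_i L_i^2$. I will use the standard exponential representation $L_i=E_i/S$ with $E_i$ i.i.d.\ $\operatorname{Exp}(1)$ and $S:=\sum_j E_j$. Setting $V:=S-r$ and $U:=\sum_i(E_i^2-2)$, a first-order Taylor expansion of $r\sum_i E_i^2/(2S^2)$ around $(S,\sum_i E_i^2)=(r,2r)$ gives
\[
\tfrac{r}{2}\sum_i L_i^2-1=\tfrac{1}{2r}\sum_i W_i+R,\quad W_i:=E_i^2-4E_i+2,\quad R=O\bigl(V^2/r^2+UV/r^2\bigr),
\]
with $\E[W_i]=0$ and $\var(W_i)=4$. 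On the high-probability event $\{|V|,|U|\le r^{3/4}\}$ (whose complement has super-polynomially small probability by Chernoff/Bernstein arguments applied to the sub-gamma pair $(V,U)$), the remainder satisfies $|R|=O(r^{-1/2})$ and is therefore negligible compared to $r^{-\sigma}$ for $\sigma<1/2$; so the bad event is essentially $\{|\sum_i W_i|\gtrsim r^{1-\sigma}\}$. I will bound this by Rosenthal's moment inequality: since $|W_i|\le(E_i+2)^2$ yields $\E[|W_i|^{2k}]\le e^2(4k)!$, Rosenthal produces $\E[|\sum_i W_i|^{2k}]\le K_k\bigl(r\,(4k)!+(4r)^k\bigr)$, and Markov's inequality optimized at $k\sim r^{(1-\sigma)/2}$ gives $\pr(|\sum_i W_i|>c\,r^{1-\sigma})\le\exp(-c'\,r^{(1-\sigma)/2})$.

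Since $(1-\sigma)/2>1/3-\sigma$ for every $\sigma<1/3$, the resulting spacings bound is comfortably smaller than the target $\exp(-0.5\,r^{1/3-\sigma})$. The main (indeed only) obstacle is the heavy Weibull-$1/2$ tail of $E_i^2$, which makes the MGF of $W_i$ infinite and blocks a direct Chernoff bound; routing through Rosenthal's moment inequality rather than exponential Markov neatly sidesteps this, at the cost of a one-time optimization over the moment order. The linearization step is a routine by-product once the event $\{|V|,|U|\le r^{3/4}\}$ has been introduced.
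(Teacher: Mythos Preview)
Your reduction is exactly what the paper does: apply the Jensen step from Theorem \ref{pr(D)<}, change to $y_i=F(x_i)$, and factor the integral as the spacings probability $\pr\bigl(|\tfrac{r}{2}\sum_i L_i^2-1|>r^{-\sigma}\bigr)$ times $r^r/\binom{r}{2}^{(r)}\le \tfrac{e}{2}(2/r)^r$. The paper then simply invokes Lemma~3.2 of \cite{Pit1} for the tail bound $\exp(-\Theta(r^{1/3-\sigma}))$, and its accompanying Note records precisely the obstacle you identify (the MGF of $E^2$ is infinite) and the exponential representation $L_i=E_i/S$ you use. So the overall route is the same; you are supplying a self-contained substitute for the cited lemma via linearization plus Rosenthal.

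One point to tighten: the Rosenthal constant $K_k$ is not bounded in $k$; the sharp form has $K_k$ of order $(Ck/\log k)^{2k}$. If you carry this through your Markov step, the choice $k\sim r^{(1-\sigma)/2}$ no longer minimizes, and the true optimum yields an exponent of order $\min\bigl((1-2\sigma)/2,\,(1-\sigma)/3\bigr)$ rather than $(1-\sigma)/2$. This does not damage your conclusion, since for every $\sigma\in(0,1/3)$ one has both $(1-2\sigma)/2\ge 1/3-\sigma$ and $(1-\sigma)/3\ge 1/3-\sigma$; but the stated rate $\exp(-c'\,r^{(1-\sigma)/2})$ is over-optimistic as written. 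Either track the growth of $K_k$ explicitly in the optimization, or replace the Rosenthal route by a sub-Weibull Bernstein-type inequality for sums of order-$1/2$ sub-Weibull summands, which gives the same two-regime exponent directly.
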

\begin{proof} Once again like the proofs of Lemmas \ref{C1}, \ref{C2},
\begin{align*}
&\pr_{C_2}(D_I) -\pr_{C^*}(D_I)\le\idotsint\limits_{\left|\frac{r}{2}\frac{\sum_i y_i^2}{\left(\sum_j y_j\right)^2}-1\right|> r^{-\sigma}}\!\!\!\!\!\left(1-\frac{t}{r}\right)^{\binom{r}{2}}\prod_{i\in I}dy_i\\
&\le \pr\Bigl(\left|\frac{r}{2}\sum_i L_i^2-1\right|>r^{-\sigma}\Bigr)\frac{r^r}{\binom{r}{2}^{(r)}}
\le \left(\frac{2}{r}\right)^r\cdot \exp\bigl(-\Theta(r^{1/3-\sigma})\bigr),
\end{align*}
as the probability is  at most $\exp\bigl(-\Theta(r^{1/3-\sigma})\bigr)$, (see Lemma 3.2 in \cite{Pit1}).
\end{proof}
{\bf Note.\/} A key to the proof of that Lemma 3.2 was a classic fact that $(L_1,\dots,L_r)$ and
$(\sum_i W_i)^{-1}(W_1,\dots, W_r)$, ($W_j$ being i.i.d. Exponentials), are equidistributed, Feller \cite{Fel}. While both of the distribution tails of $\sum_i W_j$ decay exponentially, for the right tail of $\sum_j W_j^2$ we could prove only the bound $e^{-\Theta(r^{\delta})}$, $\delta<1/3$. The obstacle here is that $\ex\bigl[e^{zW^2}\bigr]=\infty$ for every $z>0$.

Combining Lemmas \ref{C1}, \ref{C2} and \ref{C*}, we obtain
\begin{Corollary}\label{C1,C2}
\[
\pr(D_I)-\pr_{C^*}(D_I)\le \left(\frac{2}{r}\right)^r\cdot r^{-\alpha},\quad \forall\,\alpha<k-1.
\]
\end{Corollary}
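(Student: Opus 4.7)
The plan is to write the telescoping identity
\[
\pr(D_I)-\pr_{C^*}(D_I)=\bigl[\pr(D_I)-\pr_{C_1}(D_I)\bigr]+\bigl[\pr_{C_1}(D_I)-\pr_{C_2}(D_I)\bigr]+\bigl[\pr_{C_2}(D_I)-\pr_{C^*}(D_I)\bigr],
\]
which is valid because the construction gives the nested inclusions $C^*\subset C_2\subset C_1\subset [0,1]^r$, and each summand is non-negative since the integrand $\prod_{(i,j)}(1-F((x_i+x_j)/2))$ is pointwise non-negative. So the work reduces to bounding each summand by the corresponding lemma proved above and collecting the three bounds.

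By Lemma \ref{C1} the first summand is at most $(2/r)^r\exp(-r^{1/3}/10)$; by Lemma \ref{C2} the second is at most $(2/r)^r\cdot r^{-\alpha}$ for every $\alpha<k-1$; and by Lemma \ref{C*} the third is at most $(2/r)^r\exp(-0.5\,r^{1/3-\sigma})$. Factoring out the common $(2/r)^r$, the sum is bounded by
\[
\left(\frac{2}{r}\right)^r\Bigl[\exp(-r^{1/3}/10)+r^{-\alpha}+\exp(-0.5\,r^{1/3-\sigma})\Bigr].
\]
Since $\sigma<1/3$ is fixed, both exponential terms are $o(r^{-\alpha})$ as $r\to\infty$ for every fixed $\alpha$, and in particular for every $\alpha<k-1$; hence the bracket is $O(r^{-\alpha})$, absorbed into the statement's bound up to a harmless constant, or just by replacing $\alpha$ with a slightly smaller value (still $<k-1$) to eliminate the constant.

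There is really no obstacle here; the only thing to check carefully is the nesting $C^*\subset C_2\subset C_1$, which is immediate from the definitions in \eqref{more} and in Lemmas \ref{C1}--\ref{C*}, since each $C_{j+1}$ is defined as a subset of $C_j$ by an additional constraint. In particular, the use of the integral representation \eqref{P(D)<gen} followed by the change of variables $y_i=F(x_i)$ (together with the concavity argument showing $1-F(r^{-1}\sum x_i)\le 1-t/r$) that underlies all three preceding lemmas applies uniformly, so subtracting restricted integrals is legitimate and the three tail estimates combine additively as above.
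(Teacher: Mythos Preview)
Your proof is correct and is exactly the approach the paper takes: the Corollary is stated immediately after Lemmas \ref{C1}, \ref{C2}, \ref{C*} with the one-line justification ``Combining Lemmas \ref{C1}, \ref{C2} and \ref{C*}, we obtain\ldots'', and your telescoping argument over the nested sets $C^*\subset C_2\subset C_1\subset [0,1]^r$ together with the absorption of the two sub-polynomial exponential tails into the $r^{-\alpha}$ term (via a slight decrease of $\alpha$, still below $k-1$) is precisely what that combination amounts to.
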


For $\bold x\in C^*$, we have $\max_i F(x_i)\le \frac{3k\log r}{r}\to 0$, which implies that
$\max_i x_i =O(r^{-1}\log r)\to 0$. For $x=O(r^{-1}\log r)$, we have
\begin{align*}
F(x)&=g(0)x+\frac{1}{2}g^{(1)}(0)x^2 +O(x^3)\\
&=g(0)x+\frac{1}{2}g^{(1)}(0)x^2 +O(r^{-3}\log^3 r).
\end{align*}
So
\[
\log(1-F(x)) =-g(0)x-\frac{g'(0)+g^2(0)}{2}x^2+O(r^{-3}\log^3 r),
\]
and with a bit of algebra
\begin{equation}\label{gamma=}
\begin{aligned}
&\log\left(1-F\left(\frac{x_i+x_j}{2}\right)\right)-\frac{\log(1-F(x_i))+\log(1-F(x_j))}{2}\\
&=\frac{g'(0)+g^2(0)}{8}(x_i-x_j)^2+O(r^{-3}\log^3 r)\\
&=\gamma(F(x_i)-F(x_j))^2+O(r^{-3}\log^3 r),\quad \gamma:=\frac{g'(0)+g^2(0)}{8g^2(0)}.
\end{aligned}
\end{equation}
Therefore
\begin{align*}
\prod_{(i,j)}\log\left(1-F\left(\frac{x_i+x_j}{2}\right)\right)&=\exp\left(\frac{r-1}{2}\sum_i\log(1-F(x_i))\right)\\
&\times\exp\left(\gamma\sum_{(i,j)}(F(x_i)-F(x_j))^2+O(r^{-1}\log r)\right),
\end{align*}
where
\begin{align*}
\frac{r-1}{2}\sum_i\log(1-F(x_i)&=-\frac{r-1}{2}\sum_i \Bigl(F(x_i)+\frac{F^2(x_i)}{2}\Bigr)+O(r^{-1}\log r),\\
\sum_{(i,j)}(F(x_i)-F(x_j))^2&=r\sum_iF^2(x_i)-\left(\sum_iF(x_i)\right)^2.
\end{align*}
Hence on $C^*$ (see \eqref{more})
\begin{align*}
&\prod_{(i,j)}\log\left(\!1-F\left(\frac{x_i+x_j}{2}\right)\!\right)
=\exp\left(\!-\frac{r-1}{2}\sum_iF(x_i)-\gamma\Bigl(\sum_iF(x_i)\Bigr)^2\right.\\
&\qquad\qquad\qquad\qquad\qquad\qquad\left.+ \Bigl(\!-\frac{r-1}{4}+\gamma r\!\Bigr)\sum_iF^2(x_i)+O(r^{-1}\log r)\!\right)\\
&\qquad\quad=\exp\left(-\frac{r-1}{2}\sum_iF(x_i)+\Bigl(2\gamma-\frac{1}{2}\Bigr)\left(\sum_iF(x_i)\right)^2+O(r^{-\sigma})\right)\\
&\qquad\qquad\qquad =\exp\Bigl(-\frac{r}{2}\sum_iF(x_i)+\frac{g'(0)}{g^2(0)}+O(r^{-\sigma})\Bigr);
\end{align*}
for the last equality we used the definition of $\gamma$ in \eqref{gamma=}. 

Switching to the variables $y_i=F(x_i)$ and denoting $t=\sum_iy_i$, we obtain then
\begin{align*}
&\qquad\qquad\pr_{C^*}(D_I)=\idotsint\limits_{\bold y\in \mathcal C^*}\exp\Bigl(-\frac{r}{2}t+\frac{g'(0)}{g^2(0)}+O(r^{-\sigma})\Bigr)\,d\bold y,\\
\mathcal C^*&:=\left\{\bold y\ge\bold 0: |t-2|\le r^{-1/3},\,\max_i \frac{y_i}{t}\le k\frac{\log r}{r},\,
\left|\frac{r}{2t^2}\sum_iy_i^2-1\right|\le r^{-\sigma}\right\}.
\end{align*}
Notice that on $\mathcal C^*$ we  have $\max_i y_1\to 0$, so that the omitted condition $\max_i y_i\le 1$
would have been superfluous. By Lemma 3.1 in \cite{Pit1},
\begin{align*}
&\qquad\qquad\qquad\qquad\qquad\qquad\idotsint\limits_{y\in \mathcal C^*}e^{-\frac{rt}{2}}\,d\bold y\\
&=\!\!\!\int\limits_{|t-2|\le \frac{1}{r^{1/3}}} \!\! \frac{e^{-\frac{rt}{2}}t^{r-1}}{(r-1)!} \pr\!\left(\!\max L_i\le \min\Bigl(t^{-1}, \frac{k\log r}{r}
\Bigr),\,\Bigl|\frac{r}{2}\sum_i L_i^2-1\Bigr|\le r^{-\sigma}\!\right)\,dt\\
&\qquad=\pr\!\left(\!\max L_i\le \frac{k\log r}{r},\,\Bigl|\frac{r}{2}\sum_i L_i^2-1\Bigr|\le r^{-\sigma}\!\right)
\!\int\limits_{|t-2|\le \frac{1}{r^{1/3}}} \frac{e^{-\frac{rt}{2}}t^{r-1}}{(r-1)!} \,dt.
\end{align*}
From  Lemma \ref{C2} and Lemma \ref{C*}, and their proofs, we know that the probability factor is
at least $1-r^{-\alpha}$, $\forall\,\alpha<k-1$. Furthermore, the integral equals
\begin{align*}
&\int\limits_0^{\infty}\frac{e^{-\frac{rt}{2}}t^{r-1}}{(r-1)!} \,dt\,\,\,-\!\!\int\limits_{|t-2|>\frac{1}{r^{1/3}}}\!\!\!\!\!\!\!
\frac{e^{-\frac{rt}{2}}t^{r-1}}{(r-1)!} \,dt\\
&=\left(\frac{2}{r}\right)^r-\frac{\left(\frac{2}{r}\right)^r}{(r-1)!}\int\limits_{|\tau-r|>\frac{r^{2/3}}{2}}\!\!\!\!\!\!\!
e^{-\tau}\tau^{r-1}\,d\tau,
\end{align*}
and, by Chebyshev's inequality,
\begin{align*}
\frac{1}{(r-1)!}\int\limits_{|\tau-r|>\frac{r^{2/3}}{2}}\!\!\!\!\!\!\!e^{-\tau}\tau^{r-1}\,d\tau&\le
\pr\Bigl(\bigl|\text{Poisson }(r-1)-(r-1)\bigr|>\frac{r^{2/3}}{3}\Bigr)\\
&\le\frac{9(r-1)}{r^{4/3}}\le 9 r^{-1/3}.
\end{align*}
So 
\[
\int\limits_{|t-2|\le \frac{1}{r^{1/3}}} \frac{e^{-\frac{rt}{2}}t^{r-1}}{(r-1)!} \,dt=\bigl(1+O(r^{-1/3})\bigr)\left(\frac{2}{r}\right)^r.
\] 
Consequently
\[
\pr_{\mathcal C^*}(D_I)=\bigl(1+O(r^{-\sigma})\bigr)\left(\frac{2}{r}\right)^r \exp\Bigl(\frac{g'(0)}{g^2(0)}\Bigr),
\]
for every $\sigma<1/3$. Combining this estimate with Corollary \ref{C1,C2}, we complete the proof
of Theorem \ref{cond(iii)}.
\subsection{Estimate of $\pr(D_{I_1}\cap D_{I_2})$} Let $I_1,\,I_2\subset [n]$, $|I_j|=r$. If $I_1\cap I_2=\emptyset$, then the events
$D_{I_1}$ and $D_{I_2}$ are independent and so (by Theorem \ref{cond(iii)})
\begin{equation}\label{I1I2dis}
\pr(D_{I_1}\cap D_{I_2})=\!\pr(D_{I_1})\cdot\!\pr(D_{I_2})=\bigl(1+O(r^{-\sigma})\bigr)\left(\frac{2}{r}\right)^{2r} \!\!\!\exp\Bigl(2\frac{g'(0)}{g^2(0)}\Bigr).
\end{equation}
Consider the case $|I_1\cap I_2|=k\in [1,r-1]$. By symmetry, we can assume that $I_1=\{1,\dots,r\}$, and 
$I_2=\{r-k+1,\dots, 2r -k\}$. The probability of $D_{I_1}\cap D_{I_2}$, conditioned on the event $\{F_{i,i}=x_i: 1\le i\le 2r-k\}$, is
\begin{equation}\label{condprob=}
\begin{aligned}
\Psi(\bold x)&=\prod_{(i\neq j)\atop i,\,j\le r}\!\left(\!1-F\left(\frac{x_i+x_j}{2}\right)\!\right)
\prod_{r<i\le 2r-k\atop \,r-k+1\le j<r}\!\!\left(\!1-F\left(\frac{x_i+x_j}{2}\right)\!\right)\\
&\times\prod_{(i\neq j)\atop r\le i,\,j\le 2r-k}\!\!\!\left(\!1-F\!\left(\frac{x_i+x_j}{2}\right)\!\right).
\end{aligned}
\end{equation}
The three products contain, respectively,  $\binom{r}{2}$, $(r-k)(k-1)$ and $\binom{r-k+1}{2}$ factors.
The total number of the factors is
\[
N(r,k)=\binom{r}{2}+ (r-k)(k-1)+\binom{r-k+1}{2}.
\] 
Now
\begin{align*}
&\sum_{(i\neq j)\atop 1\le i,\,j\le r}\frac{x_i+x_j}{2}=\frac{r-1}{2}\sum_{i=1}^r x_i, \,\,
\sum_{(i\neq j)\atop r\le i,\,j\le 2r-k}\frac{x_i+x_j}{2}=\frac{r-k}{2}\sum_{i=r}^{2r-k}x_i,\\
&\sum_{r<i\le 2r-k\atop r-k+1\le j\le r}\frac{x_i+x_j}{2}=\frac{k-1}{2}\sum_{i=r+1}^{2r-k}x_i+
\frac{r-k}{2}\sum_{j=r-k+1}^{r-1}\!\! x_j.
\end{align*}
The total sum of the fractions $\frac{x_i+x_j}{2}$ is $\frac{r-1}{2}s_1+\frac{2r-k-1}{2}s_2+\frac{r-1}{2}s_3$, where
\[
s_1=\sum_{i=1}^{r-k} x_i, \,\,\, s_2=\!\!\sum_{i=r-k+1}^{r}\!\!\!x_i,\,\,\,s_3=\!\sum_{i=r+1}^{2r-k}x_i,
\]
and the sum of the coefficients $\alpha_i$ by $x_i$ in the sum of those fractions is $N(r, k)$.
By log-concavity of $1-F(x)$,
\begin{equation*}
\Psi(\bold x)\le \left(\!1-F\!\left(\frac{\frac{r-1}{2}s_1+\frac{2r-k-1}{2}s_2+\frac{r-1}{2}s_3}{N(r, k)}\!\right)\!\right)^{N(r, k)}.
\end{equation*}
As in the proof of Theorem \ref{pr(D)<}, introduce $y_i=F(x_i)$, $1\le i\le 2r-k$, so that
\[
s_1=\sum_{i=1}^{r-k} F^{-1}(y_i), \,\,\, s_2=\!\!\sum_{i=r-k+1}^{r}F^{-1}(y_i),\,\,\,s_3=\!\sum_{i=r+1}^{2r-k}
F^{-1}(y_i).
\]
Given $t_1,\,t_2,\,t_3$, 
by convexity of $F^{-1}$, we have
\begin{align*}
&\min\left\{\sum_{i=1}^{2r-k}\frac{\alpha_i}{N(r,k)}F^{-1}(y_i): \sum_{i=1}^{r-k}y_i=t_1,\,\sum_{i=r-k+1}^{r}\!\!y_i=t_2,\,\sum_{i=r+1}^{2r-k}y_i=t_3\right\}\\
&\qquad\quad\ge F^{-1}\!\left(\frac{\frac{r-1}{2}t_1}{N(r,k)}+\frac{\frac{2r-k-1}{2}t_2}{N(r,k)}+\frac{\frac{r-k}
{2}t_3}{N(r,k)}\right).
\end{align*}
Consequently
\begin{align*}
&\Psi(\bold x)\le\Psi^*(\bold t):=\! \left(\!1-\frac{(r-1)t_1+(2r-k-1)t_2 +(r-1)t_3}{2N(r, k)}\!\right)^{\!N(r,k)},\\
&\qquad t_1:=\sum_{i=1}^{r-k}F(x_i),\,\,t_2:=\!\!\sum_{i=r-k+1}^r\!\!\!F(x_i),\,\,t_3:=\sum_{i=r+1}^{2r-k} \!F(x_i).
\end{align*}
Therefore
\begin{equation}\label{P(cap)<(t)}
\begin{aligned}
&\pr(D_{I_1}\cap D_{I_2})=\idotsint\limits_{\bold x\in [0,1]^{2r-k}}\Psi(\bold x)\,d\bold x
\le\!\!\idotsint\limits_{ t_1\le r-k,\, t_2\le k,\,t_3\le r-k}\!\!\!\!\!\!\!\Psi^*(\bold t)\,d\bold y\\
&=\!\!\!\!\!\!\iiint\limits_{ t_1,\,t_3\le r-k,\, t_2\le k}\!\!\!\!\!\!\!\!\Psi^*(\bold t)
\frac{t_1^{r-k-1}}{(r-k-1)!}\frac{t_2^{k-1}}{(k-1)!}\frac{t_3^{r-k-1}}{(r-k-1)!}\,d\bold t.
\end{aligned}
\end{equation}
Introduce
\[
\tau_1=\frac{r-1}{2N(r,k)}t_1,\,\,\tau_2=\frac{2r-k-1}{2N(r,k)}t_2,\,\tau_3=\frac{r-1}{2N(r,k)}t_3.
\]
Since 
\begin{align*}
&\qquad\qquad t_1\le r-k,\,\, t_2\le k,\,\, t_3\le r-k,\\
&\frac{(r-1)(r-k)}{2N(r,k)}+\frac{(2r-k-1)k}{2N(r,k)}+\frac{(r-1)(r-k)}
{2N(r,k)}=1,
\end{align*}
we see that $\tau_1+\tau_2+\tau_3\le 1$. Switching to $\tau_j$, and denoting $N=N(r,k)$, we transform \eqref{P(cap)<(t)},
 into
\begin{align*}
&\pr(D_{I_1}\cap D_{I_2})\le \frac{\left(\frac{2N}{r-1}\right)^{r-k-1}}{(r-k-1)!}\cdot\frac{\left(\frac{2N}{2r-k-1}\right)^{k-1}}{(k-1)!}\cdot\frac{\left(\frac{2N}{r-1}\right)^{r-k-1}}{(r-k-1)!}\\
&\quad\times\iiint\limits_{\tau_1+\tau_2+\tau_3\le 1}\!\!\!\!\tau_1^{r-k-1}\tau_2^{k-1}\tau_3^{r-k-1}(1-\tau_1-\tau_2-\tau_3)^N\,d\tau_1d\tau_2d\tau_3\\
&\qquad\qquad=\frac{N!\left(\frac{2N}{r-1}\right)^{2(r-k-1)}\left(\frac{2N}{2r-k-1}\right)^{k-1}}{(N+2r-k)!}
\\
&\qquad\le
N^{-3}\left(\frac{2}{r-1}\right)^{2(r-k-1)}\!\left(\frac{2}{2r-k-1}\right)^{k-1}.
\end{align*}
(At the penultimate line we used the multidimensional extension of the beta integral, Andrews, Askey and Roy \cite{AndAskRoy}, Theorem 1.8.6.) Since $N=\Theta(r^2)$, we have then
\begin{equation}\label{P(D1D2}
\pr(D_{I_1}\cap D_{I_2}) =O(\!\!\pr(r,k)),\,\, \pr(r,k):=r^{-6}\!\left(\frac{2}{r}\right)^{2(r-k-1)}\!\!\left(\!\frac{2}{2r-k}\!\right)^{k-1}\!\!.
\end{equation}
\subsection{Likely range of the maximum size of the K-set} Introduce $L_n=\{\max |I|:
\eqref{King2'}\text{ holds}\}$. Kingman \cite{Kin1}, \cite{Kin2}, \cite{Kin3} proved that, for $F=\text{Uniform}[0,1]$,
w.h.p. $L_n\le n^{1/2}(\epsilon+o(1))$, where $\epsilon=\xi^{-1/2}(1-\xi)^{-1/2}$ and $\xi=2.49\dots$ is a positive root of $1-\xi=e^{-2\xi}$. The proof consisted of showing that $\!\pr(D_I)\le \frac{1}{|I|!}$, 
and that
\begin{equation}\label{r,s}
\pr(L_n\ge r)\le \frac{(n)_s}{(r)_s}\!\pr(D_I),\quad |I|=s\le r.
\end{equation}
This inequality sharpens the (first-order moment) bound $\pr(L_n\ge r)\le \binom{n}{r}\!\pr(D_I)$, $|I|=r$, 
by using the fact that every subset of a K-set is a K-set as well. Kingman also demonstrated that his {\it exact\/} formula $\pr(D_I)=\left(\frac{2}{r+1}\right)^r$ for the negative exponential distribution on $[0,\infty)$
implied a better bound
\begin{equation}\label{2/e}
L_n\le n^{1/2}\bigl[(2e^{-1})^{1/2}\epsilon +o(1)\bigr],\quad (2e^{-1})^{1/2}\epsilon=2.14\dots.
\end{equation} 
Now, by Theorem \ref{pr(D)<}, we have $\pr(D_I)\le \frac{e}{2}\left(\frac{2}{r}\right)^r$ for a wide class of
the densities on $[0,1]$, that includes the uniform density and the exponential density restricted to $[0,1]$.
Combining this Theorem and Kingman's proof for the exponential distribution, we obtain
\begin{Theorem}\label{L_n<2.14n^{1/2}} Under the conditions (i), (ii) of Theorem \ref{pr(D)<},
w.h.p.
\[
L_n\le n^{1/2}(2.14...+o(1)).
\]
\end{Theorem}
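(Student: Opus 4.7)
The plan is to combine Theorem \ref{pr(D)<} with Kingman's refined first-moment inequality \eqref{r,s}, namely
$$\pr(L_n \ge r) \le \binom{n}{s}\binom{r}{s}^{-1} \pr(D_J), \qquad |J| = s \le r,$$
which exploits the obvious fact that every subset of a K-set is a K-set, together with Markov's bound applied to the count of K-sets of size $s$. Substituting the new universal bound $\pr(D_J) \le (e/2)(2/s)^s$ from Theorem \ref{pr(D)<} gives, for every $s \le r$,
$$\pr(L_n \ge r) \le \frac{e}{2}\, \binom{n}{s}\binom{r}{s}^{-1}\!\left(\frac{2}{s}\right)^{\!s}.$$
The crux is that this upper bound has exactly the same leading exponential form $e^{s\log(2/s)}$ as Kingman's \emph{exact} formula $(2/(s+1))^s \sim e^{-1}(2/s)^s$ for the non-biological exponential distribution on $[0,\infty)$; the two differ only by an $O(1)$ multiplicative factor that disappears after taking the $s$-th root. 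Hence Kingman's optimization over $s$ for the exponential case carries over verbatim and must return the same constant.

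To carry out that optimization, set $r = c n^{1/2}$ and $s = \xi r$ with $\xi \in (0,1)$, and apply Stirling. The $\tfrac12 s \log n$ contributions from $\log\binom{n}{s}$ and from $s\log(2/s)$ cancel, leaving
$$\frac{1}{s}\log \pr(L_n \ge r) \le 1 + \log 2 - 2\log(\xi c) - H(\xi)/\xi + o(1),$$
with $H(\xi) = -\xi\log\xi - (1-\xi)\log(1-\xi)$ the binary entropy. The right-hand side is negative for some admissible $\xi$ iff $c^2 > (2e/\xi)(1-\xi)^{(1-\xi)/\xi}$.

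Minimizing the latter over $\xi$ by differentiating its logarithm leads to the critical condition $2\xi + \log(1-\xi) = 0$, i.e.\ $1 - \xi_* = e^{-2\xi_*}$, with $\xi_* \approx 0.797$. At this root, $(1-\xi_*)^{(1-\xi_*)/\xi_*} = e^{-2(1-\xi_*)}$, and a short calculation using $\epsilon^2 = [\xi_*(1-\xi_*)]^{-1} = e^{2\xi_*}/\xi_*$ gives $c_*^2 = (2/e)\epsilon^2$, hence the claimed $c_* = (2e^{-1})^{1/2}\epsilon = 2.14\ldots$. The only mildly delicate point in the plan is confirming that the multiplicative $e/2$ in our bound (versus the asymptotic $e^{-1}$ from Kingman's exponential formula) is truly inessential; but both are $O(1)$ while $s$ grows like $n^{1/2}$, so the discrepancy is absorbed in the $o(1)$ term above. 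There is in essence no serious obstacle: Theorem \ref{pr(D)<} has already done all the analytic work by matching the exponential asymptotics for the whole class (I), (II).
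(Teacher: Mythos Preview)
Your proposal is correct and follows exactly the route the paper takes: the paper's own ``proof'' of this theorem is just the one-line remark that combining Theorem~\ref{pr(D)<} with Kingman's argument for the exponential case yields the constant $2.14$, and you have carried out that combination explicitly (indeed, in more detail than the paper itself). The optimization leading to $1-\xi_*=e^{-2\xi_*}$ and $c_*^2=(2/e)\epsilon^2$ is right, and your observation that the $O(1)$ prefactor $e/2$ is swallowed by the $o(1)$ is precisely why the paper can simply invoke Kingman's exponential calculation verbatim.
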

Armed with the bound \eqref{P(D1D2} and the bounds in Theorem \ref{pr(D)<}, we can prove a qualitatively matching lower bound.
\begin{Theorem}\label{L_n>2n^{1/3}} Let $X_{n,r}$ stand for the total number of K-sets of
cardinality $r$. Introduce $r(n)=\lceil 2n^{1/3}\rceil$. Then, under the conditions (i), (ii) and (iii) of Theorem \ref{cond(iii)}, 
\[
\pr\left(\bigcap_{\rho=2}^{r(n)}\left\{\Bigl|\frac{X_{n,\rho}}{\ex\bigl[X_{n,\rho}\bigr]}-1\Bigr|\le n^{-1/6+\eps}\right\}\right)
=1-O(n^{-2\eps}),\quad\forall\, \eps\in (0,1/6).
\]
Consequently, $\min_{r\in [2,r(n)]}X_{n,r}\to\infty$ in probability, and so
\[
\lim_{n\to\infty}\!\pr\bigl(L_n\ge 2n^{1/3}\bigr)=1.
\]
\end{Theorem}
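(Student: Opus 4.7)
The plan is to apply the second-moment method to each $X_{n,\rho}$ separately and then combine a per-$\rho$ Chebyshev estimate with a union bound over the $O(n^{1/3})$ admissible values $\rho\in[2,r(n)]$. The principal analytic input is the variance estimate $\mathrm{Var}(X_{n,\rho})/\E[X_{n,\rho}]^2=O(n^{-2/3})$, valid uniformly in $\rho$, which the introduction of the paper already announces. Granted this, Chebyshev's inequality gives
\begin{equation*}
\pr\left(\left|\frac{X_{n,\rho}}{\E[X_{n,\rho}]}-1\right|>n^{-1/6+\eps}\right)=O(n^{-1/3-2\eps}),
\end{equation*}
and summing over the $r(n)-1=O(n^{1/3})$ admissible $\rho$ yields the claimed $O(n^{-2\eps})$ bound on the complement of the stated event.

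To establish the variance bound I would split according to $k:=|I_1\cap I_2|$:
\begin{equation*}
\mathrm{Var}(X_{n,\rho})=\sum_{k=0}^{\rho}\binom{n}{\rho}\binom{\rho}{k}\binom{n-\rho}{\rho-k}\bigl[\pr(D_{I_1}\cap D_{I_2})-\pr(D_I)^2\bigr],
\end{equation*}
with $I_1,I_2$ any pair of $\rho$-subsets meeting in $k$ elements. The disjoint case $k=0$ contributes zero, since $D_{I_1}$ and $D_{I_2}$ then depend on disjoint blocks of i.i.d.\ fitnesses. The diagonal $k=\rho$ contributes at most $\E[X_{n,\rho}]$, giving $\E[X_{n,\rho}]^{-1}$ in the normalized ratio; this is $o(n^{-2/3})$ because $\E[X_{n,\rho}]\to\infty$ throughout the entire range (already known from the first-moment analysis). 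For $1\le k\le\rho-1$ I would insert the bound \eqref{add2} for $\pr(D_{I_1}\cap D_{I_2})$, the asymptotic $\pr(D_I)^2\asymp(2/\rho)^{2\rho}$ from Theorem~\ref{cond(iii)}, and the elementary hypergeometric estimate $\binom{\rho}{k}\binom{n-\rho}{\rho-k}/\binom{n}{\rho}\le\binom{\rho}{k}(\rho/n)^k$, arriving at a $k$-th contribution controlled (up to an absolute constant) by
\begin{equation*}
a_k:=\binom{\rho}{k}(\rho/n)^k\cdot\rho^{k-3}/2^{k+3}.
\end{equation*}

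The ratio $a_{k+1}/a_k=\Theta(\rho^3/((k+1)n))$ is $O(1/(k+1))$ throughout $\rho\le 2n^{1/3}$, because then $\rho^3/n\le 8$; so $(a_k)$ is dominated by a Poisson-type sequence and $\sum_{k\ge 1}a_k=O(1/n)$, a bound that in fact slightly beats the target $O(n^{-2/3})$. The step I expect to be the main obstacle is ensuring uniform constants over all $\rho\in[2,r(n)]$ and all $k\in[1,\rho-1]$ in two edge regimes: for $k$ close to $\rho$ the approximation $2\rho-k\asymp\rho$ used to simplify \eqref{add2} becomes lossy, and one must separately invoke the sharper decay of $\binom{n-\rho}{\rho-k}/\binom{n}{\rho}$ near $k=\rho$ to kill those terms; for small $\rho$ several of the intermediate bounds are crude, but the looseness is absorbed by the smallness of $\E[X_{n,\rho}]^{-1}$. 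Once the variance estimate is in hand, the Chebyshev-plus-union-bound argument furnishes the stated concentration, and in particular $X_{n,r(n)}\ge(1-n^{-1/6+\eps})\E[X_{n,r(n)}]>0$ with probability $1-O(n^{-2\eps})$; this exhibits a K-set of size $r(n)\ge 2n^{1/3}$, whence $\pr(L_n\ge 2n^{1/3})\to 1$ and, by the same event, $\min_{\rho\in[2,r(n)]}X_{n,\rho}\to\infty$ in probability.
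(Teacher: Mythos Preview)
Your proposal is correct and follows essentially the same route as the paper: second-moment method with the split by $k=|I_1\cap I_2|$, the bound \eqref{add2} for $1\le k\le r-1$, control of the resulting sum to obtain $\mathrm{Var}(X_{n,\rho})/\E[X_{n,\rho}]^2=O(n^{-2/3})$ uniformly in $\rho$, and finally Chebyshev plus a union bound over the $O(n^{1/3})$ values of $\rho$. The only tactical difference is that the paper works with the factorial moment $\E[(X_{n,r})_2]$ and bounds the ratio $\mathcal N(r,k+1)\pr(r,k+1)/\bigl(\mathcal N(r,k)\pr(r,k)\bigr)$ directly (without approximating $2r-k$), showing it is at most $e^{8n^{-2/3}}$ with maximum at $k=1$, so the whole sum is at most $2r\,\mathcal N(r,1)\pr(r,1)=O((r/n)\,\E[X_{n,r}]^2)$; this sidesteps the edge regime you flagged.
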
 
\begin{proof} This time we use the second-order moment approach. By Theorem \ref{pr(D)<}, for a generic set $I$ of cardinality $r\in [2,r(n)]$ we have
\[
\ex\bigl[X_{n,r}\bigr]=\binom{n}{r}\!\!\pr(D_I)\ge\frac{n^r}{2r!} \left(\frac{2}{r+2}\right)^r\ge \text{const }n^2.
\]
The total number of ordered pairs $\{I_1,I_2\}$, with $|I_1|=|I_2|=r$, $|I_1\cap I_2|=k$, is
\[
\mathcal N(r,k)=\binom{n}{r}\binom{r}{k}\binom{n-r}{r-k}.
\]
Therefore, for a pair of generic sets $I_1$, $I_2$ meeting the conditions above,
\begin{equation}\label{E_2}
\ex\bigl[(X_{n,r})_2\bigr]=\sum_{k=0}^{r-1}\mathcal N(r,k)\!\pr(D_{I_1}\cap D_{I_2}).
\end{equation}
Here $\pr(D_{I_1}\cap D_{I_2})=O(\!\!\pr(r,k))$, with $\pr(r,k)$ given in \eqref{P(D1D2}. After some elementary computations we obtain that
\begin{align*}
&\qquad\qquad\qquad\qquad\quad\max_{k\in [1,r-1]}\frac{\mathcal N(r,k+1)\!\pr(r,k+1)}{\mathcal N(r,k)\!\pr(r,k)}\\
&\le 
\frac{r^2}{2}\max_{k\in [1,r-1]}\frac{(r-k)^2}{(k+1)(2r-k-1)(n-2r+k+1)}\cdot\exp\left(\frac{k-1}{2r-k-1}\right)\\
&\qquad\qquad\qquad\qquad\quad=\frac{r^2(r-1)}{8(n-2r+2)}\le e^{8n^{-2/3}};
\end{align*}
(the second line maximum is attained at $k=1$). Consequently
\begin{align*}
\sum_{k=1}^{r-1}\mathcal N(r,k)\!\pr(r,k)&\le re^{8rn^{-2/3}}\mathcal N(r,1)\!\pr(r,1)\le 2r\mathcal N(r,1)\!\pr(r,1)\\
&\le 2\binom{n}{r}\binom{n-r}{r-1}\left(\frac{2}{r}\right)^{2r}\\
&=O\Bigl(n^{-1}\mathcal N(r,0)\!\pr^2(D_I)\bigr)=O\Bigl(\frac{r}{n}\ex^2\bigl[X_{n,r}\bigr]\Bigr).
\end{align*}
(For the last equality we used the lower bound for $\!\!\pr(D_I)$ in Theorem \ref{pr(D)<}.) 
Therefore, uniformly for $r\in [2,r(n)]$,
\begin{equation}\label{2}
\frac{\sum_{k=1}^{r-1} \mathcal N(r,k)\!\pr(r,k)}{\ex^2\bigl[X_{n,r}\bigr]}=O(n^{-2/3}).
\end{equation}
From the equations
\eqref{E_2} and \eqref{2}, and $\ex\bigl[X_{n,r}\bigr]\ge \text{const }n^2\gg n^{2/3}$, we have
\[
\frac{\ex\bigl[(X_{n,r})_2\bigr]}{\ex^2\bigl[X_{n,r}\bigr]}=1+O(n^{-2/3})\Longrightarrow 
\frac{\text{Var}(X_{n,r})}{\ex^2\bigl[X_{n,r}\bigr]}=O(n^{-2/3}).
\]
By Chebyshev's inequality,
\[
\pr\left(\left|\frac{X_{n,r}}{\ex\bigl[X_{n,r}\bigr]}-1\right|\le \delta\right)\ge 1-O\bigl(\delta^{-2}n^{-2/3}\bigr)\to 1,
\]
uniformly for all $\delta\gg n^{-1/3}$ and $r\in [2,r(n)]$. Therefore
\[
\sum_{r=2}^{r(n)}\pr\left(\Bigl |\frac{X_{n,r}}{\ex\bigl[X_{n,r}\bigr]}-1\Bigr |\ge\delta\right)
=O\bigl(\delta^{-2}n^{-1/3}\bigr)\to 0,
\]
which implies:  for $\eps\in (0,1/6)$,
\[
\pr\left(\bigcap_{r=2}^{r(n)}\left\{\Bigl|\frac{X_{n,r}}{\ex\bigl[X_{n,r}\bigr]}-1\Bigr|\le n^{-1/6+\eps}\right\}\right)
=1-O(n^{-2\eps}).
\]
\end{proof}
\subsection{Estimate of $\pr(\mathcal D_I)$} Recall that the event $\mathcal D_I$ happens iff $I$ is a $K$-set
{\it and\/} no $J\subset I$, with $|J|\ge 2$, supports a local equilibrium  $\bold p=\{p_i\}_{i\in J}>\bold 0$, ($\sum_{i\in J}p_i=1$).
\\

Let the event $D_I$ holds, so that $f_{u,v}\ge (f_{u,u}+f_{v,v})/2$ for all $u,v\in I$. So $D_J$ holds for every $J\subseteq I$.
Suppose that for some $i\neq j$ in $I$ we have $f_{i,j} > \max\{f_{i,i},\,f_{j,j}\}$. Set $J=\{i,j\}$ and
\[
p_i:=\frac{f_{i,j}-f_{j,j}}{2f_{i,j}-f_{i,i}-f_{j,j}}>0, \quad p_j=\frac{f_{i,j}-f_{i,i}}{2f_{i,j}-f_{i,i}-f_{j,j}}>0.
\]
Then $\bold p=(p_i,p_j)$ is a non-trivial local equilibrium,  and this cannot happen on the event $\mathcal D_I$. Thus
\[
\mathcal D_I\subseteq\bigcap_{(i\neq j):\, i,j\in I}\left\{\frac{f_{i,i}+f_{j,j}}{2}\le f_{i,j}\le \max\{f_{i,i}, f_{j,j}\}\right\}.
\]
Consequently we obtain
\begin{equation}\label{<}
\pr(\mathcal D_I\,|\,f_{i,i}=x_i,\,i\in I)\le\!\! \prod_{(i\neq j):\, i,j\in I}\!\left[\!F\bigl( \max\{x_i, x_j\}\bigr)-\!
F\!\left(\!\frac{x_i+x_j}{2}\!\right)\!\right].
\end{equation}
Introduce $y_i=F(x_i)$,  i.e. $x_i=F^{-1}(y_i)$, $(i\in I)$. Then $F\bigl( \max\{x_i, x_j\}\bigr)=\max\{y_i,y_j\}$,
and (since $F^{-1}(y)$ is convex),
\begin{align*}
F\left(\frac{x_i+x_j}{2}\right)&=F\left(\frac{F^{-1}(y_i)+F^{-1}(y_j)}{2}\right)\\
&\ge F\left(F^{-1}\left(\frac{y_i+y_j}{2}\right)
\right)=\frac{y_i+y_j}{2}.
\end{align*}
Therefore
\[
\pr\bigl(\mathcal D_I\,|\,f_{i,i}=x_i,\,i\in I\bigr)\le \prod_{(i\neq j):\, i,j\in I}\!\frac{|y_i-y_j|}{2},
\]
implying
\begin{equation}\label{pr<Selint}
\pr(\mathcal D_I)\le 2^{-r(r-1)/2}\idotsint\limits_{\bold y\in [0,1]^r}\prod_{(i\neq j):\, i,j\in I}\! |y_i-y_j|\,\,d\bold y,\quad
r:=|I|.
\end{equation}
Since the integral is below $1$, we see that
\begin{equation}\label{<2^{-r(r-1)/2}}
\pr(\mathcal D_I)\le 2^{-r(r-1)/2}.
\end{equation} 
Hence
\begin{Corollary}\label{minsupmax} With probability $\ge 1- n^{-a}$, $(\forall\,a>0)$, there is no $K$-set 
of cardinality $\ge r_n:=\lceil 2\log_2 n\rceil$, that  contains, properly, the support of a non-trivial local equilibrium. 
\end{Corollary}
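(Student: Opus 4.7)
The plan is a first-moment (union) bound on the count of subsets $I\subseteq[n]$ with $|I|\ge r_n$ for which $\mathcal D_I$ holds, using the already-established inequality $\pr(\mathcal D_I)\le 2^{-r(r-1)/2}$ from \eqref{<2^{-r(r-1)/2}} (with $r=|I|$). First I would write
\[
\pr\bigl(\exists\,I:\,|I|\ge r_n,\,\mathcal D_I\text{ holds}\bigr)\le \sum_{r=r_n}^n\binom{n}{r}\,2^{-r(r-1)/2},
\]
and reduce the claim to showing that this sum is $o(n^{-a})$ for every fixed $a>0$.

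To estimate each summand I would use $\binom{n}{r}\le (en/r)^r$, so that the $r$-th term is at most $2^{\,r\log_2(en/r)-r(r-1)/2}$. The key observation is that for $r\ge r_n=\lceil 2\log_2 n\rceil$ one has $(r-1)/2\ge \log_2 n-1/2$, which makes the exponent at most $r\bigl(\log_2 e+\tfrac12-\log_2 r\bigr)$. At $r=r_n\sim 2\log_2 n$ this evaluates to $-2\log_2 n\,\log_2\log_2 n\,(1+o(1))$, so the leading summand is bounded by $n^{-2\log_2\log_2 n\,(1+o(1))}$, which is $o(n^{-a})$ for every fixed $a>0$ as soon as $n$ is large.

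Next, the tail of the sum is easily absorbed: the ratio of consecutive summands equals
\[
\frac{n-r}{r+1}\cdot 2^{-r}\le n\cdot 2^{-r_n}\le n^{-1}\quad(r\ge r_n),
\]
since $2^{r_n}\ge n^2$, so the entire sum is bounded by twice its leading term, and the probability bound stated in the corollary follows.

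The only delicate point is pinpointing the right threshold: the combinatorial factor $\binom{n}{r}\sim n^r$ is overwhelmed by the super-exponential decay $2^{-\binom{r}{2}}$ precisely when $r$ exceeds $2\log_2 n$, which is why the constant $2$ in the definition of $r_n$ is the natural one for this first-moment argument, and no input finer than $\pr(\mathcal D_I)\le 2^{-\binom{r}{2}}$ is needed. Everything else is routine Stirling/union-bound bookkeeping.
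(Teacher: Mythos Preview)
Your argument is correct and follows the same first-moment strategy as the paper, using only the crude bound $\pr(\mathcal D_I)\le 2^{-\binom{r}{2}}$. The one difference is organizational: the paper bounds the count only at the single value $r=r_n$, obtaining $\binom{n}{r_n}2^{-r_n^2/2}\le 1/r_n!$, and implicitly relies on the hereditary property that if $\mathcal D_I$ holds for some $I$ with $|I|>r_n$, then $\mathcal D_{I'}$ holds for every $I'\subset I$ of size $r_n$ (since any subset of a $K$-set is a $K$-set, and proper subsets of $I'$ are proper subsets of $I$). You instead sum over all $r\ge r_n$ and control the tail by the geometric-ratio estimate; this avoids invoking the hereditary property at the cost of a couple of extra lines. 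Either route yields the same $n^{-\Theta(\log\log n)}$ bound.
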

\begin{proof} By \eqref{<2^{-r(r-1)/2}} the expected number of $K$-sets in question is, at most, of order
\[
\binom{n}{r_n} 2^{-r_n^2/2}\le \frac{1}{r_n!}\le n^{-a}, \quad\forall\, a>0.
\]
\end{proof}
We can do better though. The integral in \eqref{pr<Selint} is a special case of Selberg's remarkable
integral, \cite{AndAskRoy}, Section 8.1: in particular, for $\alpha>0,\, \beta>0, \,\gamma\ge 0$,
\begin{equation}\label{Selb}
\begin{aligned}
&\idotsint\limits_{\bold y\in [0,1]^r}\prod_{i\in I}\bigl\{y_i^{\alpha-1}(1-y_i)^{\beta-1}\bigr\}\,\prod_{(i\neq j):\, i,j\in I}\! 
|y_i-y_j|^{2\gamma}\,\,d\bold y\\
=&\prod_{j=1}^r\frac{\Gamma\bigl(\alpha+(j-1)\gamma\bigr)\,\Gamma(\bigl(\beta+(j-1)\gamma\bigr)\,\Gamma(1+j\gamma)}{\Gamma\bigl(\alpha+\beta+(r+j-2)\gamma\bigr)\,\Gamma(1+\gamma)}.
\end{aligned}
\end{equation}
So we have
\[
\pr(\mathcal D_I)\le 2^{-r(r-1)/2}\mathcal S(r),\quad \mathcal S(r):=\prod_{j=1}^r\frac{\Gamma^2\bigl(1+(j-1)/2\bigr)\,\Gamma(1+j/2)}{\Gamma\bigl(1+(r+j)/2\bigr)\,\Gamma(3/2)}.
\]
Using the Stirling formula 
\[
\Gamma(1+z)=\sqrt{2\pi z}\left(\frac{z}{e}\right)^z(1+O(z^{-1})),\quad z\to\infty,
\]
and applying the Euler summation formula to the logarithm of the resulting product, one can show that, for some constants $\eta_1$, $\eta_2$,
\begin{equation}\label{prod(j:r)approx}
\mathcal S(r)=
2^{-r^2}\exp\!\left(\eta_1r\log r+ \eta_2 r +O(\log r)\right).
\end{equation}
We have proved
\begin{Lemma}\label{pr(mathcal D)approx} There exist  constants $\eta_1^*$, $\eta_2^*$ such that
\[
\pr(\mathcal D_I)\le 2^{-\frac{3}{2}r^2}\exp\left(\eta_1^*r\log r + \eta_2^* r +O(\log r)\right),\quad r:=|I|.
\]
\end{Lemma}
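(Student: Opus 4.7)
The plan is to read off \eqref{prod(j:r)approx} as the essentially complete content of the lemma, since once we have $\mathcal{S}(r)=2^{-r^2}\exp(\eta_1 r\log r+\eta_2 r+O(\log r))$, multiplying by the prefactor $2^{-r(r-1)/2}$ from \eqref{pr<Selint} yields $\!\pr(\mathcal D_I)\le 2^{-3r^2/2+r/2}\mathcal{S}(r)$, and the factor $2^{r/2}$ is absorbed into the linear-in-$r$ exponent, giving $\eta_1^*=\eta_1$ and $\eta_2^*=\eta_2+\tfrac{1}{2}\log 2$. So the whole task reduces to a careful asymptotic analysis of
\[
\log \mathcal{S}(r)=\sum_{j=1}^r\!\Bigl[2\log\Gamma\bigl(1+(j-1)/2\bigr)+\log\Gamma\bigl(1+j/2\bigr)-\log\Gamma\bigl(1+(r+j)/2\bigr)\Bigr]-r\log\Gamma(3/2).
\]

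First I would apply Stirling in the precise form $\log\Gamma(1+z)=z\log z-z+\tfrac12\log(2\pi z)+O(z^{-1})$ (valid for all the arguments above, the smallest of which is $\Theta(1)$, so the finitely many small-$j$ terms go into the $O(\log r)$ remainder). This expands $\log\mathcal{S}(r)$ as a sum of three contributions: a leading piece $\sum_j [2h((j-1)/2)+h(j/2)-h((r+j)/2)]$ with $h(z):=z\log z-z$, a Stirling-correction piece of the form $\tfrac12\sum_j\log(\cdots)$, and an error piece $\sum_j O(j^{-1}+(r+j)^{-1})=O(\log r)$.

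Next I would evaluate the leading piece by Euler–Maclaurin, or equivalently by replacing each of the three subsums with the primitive $\int h(u)\,du=\tfrac{u^2}{2}\log u-\tfrac{3u^2}{4}$. A direct computation shows
\[
\sum_{j=1}^r h(j/2)\sim \tfrac{r^2}{4}\log(r/2)-\tfrac{3r^2}{8},\qquad \sum_{j=1}^r h((r+j)/2)\sim r^2\log r-\tfrac{9r^2}{8}-\tfrac{r^2}{4}\log(r/2),
\]
and the $\sum_j h((j-1)/2)$ differs from $\sum_j h(j/2)$ by $O(r\log r)$. Adding these with the correct coefficients $(2,1,-1)$ makes the $r^2\log r$ parts cancel exactly and leaves $-r^2\log 2$ at order $r^2$, with an $O(r\log r)$ error inherited from Euler–Maclaurin. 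The $\tfrac12\log(2\pi z)$ Stirling correction, summed over $j$, contributes $\Theta(r\log r)+\Theta(r)$ terms whose coefficients are finite and explicit. Packaging everything produces $\log\mathcal{S}(r)=-r^2\log 2+\eta_1 r\log r+\eta_2 r+O(\log r)$ with the asserted form; one then combines with $2^{-r(r-1)/2}$ as above to conclude.

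The only delicate bookkeeping is tracking the $r\log r$ and $r$ contributions through the three subsums, since individually each carries cubic-in-$r$ pieces before cancellation — but since the lemma only asserts the existence of $\eta_1^*,\eta_2^*$ rather than their values, the computation can be carried out at the level of orders of growth, and no sharper remainder than $O(\log r)$ needs to be justified. The only genuine obstacle is verifying that the Euler–Maclaurin boundary and error terms really are $O(\log r)$ and not larger; this follows because the first derivative of $h(u)=u\log u-u$ is $\log u$, whose boundary value at $u=\Theta(r)$ is $O(\log r)$, and the higher Euler–Maclaurin corrections involve derivatives that decay like $u^{-1}$ on the summation range.
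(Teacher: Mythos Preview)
Your approach is essentially the same as the paper's: the lemma is stated immediately after \eqref{prod(j:r)approx} with the words ``We have proved'', so the paper's proof is precisely the combination of \eqref{pr<Selint} with \eqref{prod(j:r)approx}, and the paper obtains \eqref{prod(j:r)approx} by the same Stirling-plus-Euler-summation route you outline (it just declines to give details). Two small slips to clean up: the displayed inequality $\pr(\mathcal D_I)\le 2^{-3r^2/2+r/2}\mathcal S(r)$ should read $\pr(\mathcal D_I)\le 2^{-3r^2/2+r/2}\exp(\eta_1 r\log r+\eta_2 r+O(\log r))$ (you have already absorbed the $2^{-r^2}$ from $\mathcal S(r)$ into the exponent), and the individual subsums carry $r^2\log r$ pieces before cancellation, not cubic ones.
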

So $\pr(\mathcal D_I)$ is of order $2^{-\frac{3(1+o(1))}{2}r^2}$, at most. This leads immediately to a better upper bound 
fot the maximum size of a $K$-set free of supports of local equilibriums. 
\begin{Theorem}\label{2/3} With probability $\ge 1- \exp\bigl(-\Theta(\eps \log^2 n)\bigr)$,  there is no $K$-set 
of cardinality $\ge r_n^*:=\lceil (2/3+\eps)\log_2 n\rceil$, that properly contains the support of a non-trivial local equilibrium. 
\end{Theorem}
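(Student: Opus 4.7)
The plan is a first-moment (Markov) argument driven by Lemma~\ref{pr(mathcal D)approx}. The event the theorem asks us to rule out is that some $K$-set $I$ of cardinality $\ge r_n^*$ satisfies $\mathcal D_I$---equivalently, is free of any proper non-trivial local-equilibrium sub-support. By union bound its probability is at most
\[
\sum_{r\ge r_n^*}\binom{n}{r}\,\!\pr(\mathcal D_I)\le \sum_{r\ge r_n^*}\Bigl(\frac{en}{r}\Bigr)^r 2^{-3r^2/2}\exp\bigl(\eta_1^* r\log r+\eta_2^* r+O(\log r)\bigr),
\]
using Lemma~\ref{pr(mathcal D)approx} together with $\binom{n}{r}\le(en/r)^r$.

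Taking base-$2$ logarithms, the $r$-th summand has exponent $r\log_2 n-\tfrac{3}{2}r^2+O(r\log r)$. At $r=r_n^*=\lceil(2/3+\eps)\log_2 n\rceil$ the two leading terms give
\[
(2/3+\eps)\log_2^2 n-\tfrac{3}{2}(2/3+\eps)^2\log_2^2 n=-(\eps+\tfrac{3}{2}\eps^2)\log_2^2 n,
\]
which dominates the $O(r\log r)=O(\log n\log\log n)$ remainder, so the $r=r_n^*$ term alone is already $\exp(-\Theta(\eps\log^2 n))$.

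For $r>r_n^*$ the derivative $\partial_r[r\log_2 n-\tfrac{3}{2}r^2]=\log_2 n-3r$ is $-\Theta(\log n)$ once $r>(2/3)\log_2 n$, so each successive summand shrinks by a factor $2^{-\Theta(\log n)}=n^{-\Theta(1)}$ relative to its predecessor; the full tail is therefore dominated (up to a constant) by the $r=r_n^*$ term, and the desired $1-\exp(-\Theta(\eps\log^2 n))$ bound follows.

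The only obstacle is bookkeeping: the corrections $\eta_1^* r\log r$ from Lemma~\ref{pr(mathcal D)approx} and $r\log(e/r)$ from Stirling must not swamp the negative leading term. Both are $\Theta(\log n\log\log n)=o(\eps\log^2 n)$ for any fixed $\eps>0$, so they are absorbed into the $-\Theta(\eps\log^2 n)$ main term. No probabilistic input beyond Lemma~\ref{pr(mathcal D)approx} is required; the improvement over Corollary~\ref{minsupmax} (from $2\log_2 n$ to $(2/3+\eps)\log_2 n$) comes entirely from replacing the crude bound $\!\pr(\mathcal D_I)\le 2^{-r(r-1)/2}$ by the Selberg-integral-based bound $\!\pr(\mathcal D_I)\le 2^{-3r^2/2+o(r^2)}$.
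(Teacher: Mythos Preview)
Your argument is correct and is exactly the first–moment computation the paper has in mind; the paper itself gives no detailed proof of Theorem~\ref{2/3}, merely remarking that the bound in Lemma~\ref{pr(mathcal D)approx} ``leads immediately'' to the stated threshold. Your calculation $r\log_2 n-\tfrac32 r^2=-(\eps+\tfrac32\eps^2)\log_2^2 n$ at $r=r_n^*$, with the $O(r\log r)$ corrections absorbed, is the whole story.

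One small shortcut you could use instead of summing over all $r\ge r_n^*$: the events $\mathcal D_I$ are hereditary, i.e.\ $\mathcal D_I\subseteq \mathcal D_{I'}$ for every $I'\subseteq I$ with $|I'|\ge 2$ (any subset of a $K$-set is a $K$-set, and a proper sub-support of $I'$ would also be a proper sub-support of $I$). Hence existence of some $I$ with $|I|\ge r_n^*$ and $\mathcal D_I$ forces existence of one with $|I|=r_n^*$, so the single term $\binom{n}{r_n^*}\pr(\mathcal D_I)$ already bounds the probability. This is how the paper's proof of Corollary~\ref{minsupmax} is organized, and it spares you the tail-sum monotonicity check. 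Either route is fine.
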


The sharp formula  \eqref{prod(j:r)approx} allows us to show that with high probability there exist many $K$ sets of the
logarithmic size that do not contain the size $2$ supports of local equilibriums in the case when $f_{i,j}$ are uniform.

Given a set $I$, $|I|\ge 3$, let $\mathcal D_I^*$ be the event that $I$ is a $K$-set meeting the above, less
stringent, requirement. For brevity, we call such $I$ a $K^*$-set. Instead of the inequality \eqref{<}, here we have the equality 
 \begin{equation}\label{=}
\pr(\mathcal D_I^*\,|\,f_{i,i}=x_i,\,i\in I)=\!\! \prod_{(i\neq j):\, i,j\in I}\!\left[\!F\bigl( \max\{x_i, x_j\}\bigr)-\!
F\!\left(\!\frac{x_i+x_j}{2}\!\right)\!\right].
\end{equation}
For the uniform fitnesses  the RHS in \eqref{=} is the product of the $|x_i-x_j|/2$.
So, by  \eqref{Selb} and \eqref{prod(j:r)approx},
\begin{equation}\label{P(mathcalD_I^*)=}
\pr(\mathcal D_I^*)=2^{-\frac{3}{2}\rho^2}\exp\left(\eta_1^*\rho\log \rho + \eta_2^* \rho +O(\log\rho)\right),\quad \rho:=|I|.
\end{equation}
Let $X_{n,r}^*$ denote the total number of the $K^*$-sets of cardinality $r$. Then the expected number of the $K^*$-sets
of cardinality $r$ is $\ex[X_{n,r}^*]=\binom{n}{r}\pr(\mathcal D_I^*)$, $(|I|=r)$. This expectation is easily shown
to be of order $\ge \exp\bigl(\Theta(\eps\log^2 n)\bigr)$, thus super-polynomially large, if $r=\bigl[(2/3)(1-\eps)\log_2 n\bigr]$, $\eps\in (0,1)$. In fact, we are about to prove that $X_{n,r}^*$ is likely to be this large if $r< 0.5 \log_2 n$.
\begin{Theorem}\label{K*}  For $\,r=\bigl[(0.5-\eps)\log_2 n\bigr]$, ($\eps<1/4$), we have
\[
\pr\Bigl(X_{n,r}^*\ge \exp\bigl(\Theta(\eps\log^2 n)\bigr)\Bigr)\ge 1- O\bigl(n^{-2\eps+O(\log\log n/(\log n))}\bigr).
\]
\end{Theorem}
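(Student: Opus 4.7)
The plan is to deploy the second moment method, paralleling the proof of Theorem~\ref{L_n>2n^{1/3}}. Set $X:=X_{n,r}^*$. From \eqref{P(mathcalD_I^*)=} and Stirling's formula, $\log_2\E[X]=\log_2\binom{n}{r}-\tfrac{3}{2}r^2+O(r\log r)$, which for $r=\lfloor(1/2-\eps)\log_2 n\rfloor$ and $\eps\in(0,1/4)$ evaluates to $(1/2-\eps)(1/4+3\eps/2)\log_2^2 n+O(r\log r)$; in particular $\E[X]=\exp(\Omega(\log^2 n))$. Hence it suffices to establish $\Var(X)/\E[X]^2=O(n^{-2\eps+O(\log\log n/\log n)})$, since then Chebyshev's inequality gives $\pr(X\ge\E[X]/2)\ge 1-O(n^{-2\eps+O(\log\log n/\log n)})$ and $\E[X]/2\ge\exp(\Theta(\log^2 n))\ge\exp(\Theta(\eps\log^2 n))$.

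The main task is to bound $\pr(\mathcal D_{I_1}^*\cap\mathcal D_{I_2}^*)$ for $|I_1|=|I_2|=r$ with $|I_1\cap I_2|=k$, $1\le k\le r-1$. Write $I_1\cup I_2=A\sqcup K\sqcup B$ with $|A|=|B|=r-k$, $|K|=k$. Conditioning on the diagonals as in \eqref{=}, the conditional probability factorizes as $\prod|x_i-x_j|/2$ over all pairs lying entirely within $I_1$ or entirely within $I_2$---the ``non-straddling'' pairs, numbering $2\binom{r}{2}-\binom{k}{2}$. Integrating out the $A$- and $B$-variables with the $K$-variables held fixed at $w$ gives
\[
\pr(\mathcal D_{I_1}^*\cap\mathcal D_{I_2}^*)=2^{-(2\binom{r}{2}-\binom{k}{2})}\int_{[0,1]^k}\prod_{i<j\in K}|w_i-w_j|\,G(w)^2\,dw,
\]
where
\[
G(w):=\int_{[0,1]^{r-k}}\prod_{i<j\in A}|y_i-y_j|\prod_{i\in A,\,j\in K}|y_i-w_j|\,dy.
\]

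The key step is to bound $\int\prod_{i<j\in K}|w_i-w_j|\,G(w)^2\,dw$ by combining two observations. First, $G(w)\le\mathcal S(r-k)$ pointwise, since $|y_i-w_j|\le 1$. Second, regrouping $A\cup K$ as a single $r$-set and expanding the Vandermonde product,
\[
\int_{[0,1]^k}\prod_{i<j\in K}|w_i-w_j|\,G(w)\,dw=\int_{[0,1]^r}\prod_{i<j}|x_i-x_j|\,dx=\mathcal S(r).
\]
Thus $\int\prod|w-w|G^2\le\|G\|_\infty\cdot\int\prod|w-w|G\le\mathcal S(r-k)\mathcal S(r)$, and by \eqref{prod(j:r)approx},
\[
\frac{\pr(\mathcal D_{I_1}^*\cap\mathcal D_{I_2}^*)}{\pr(\mathcal D_I^*)^2}\le 2^{\binom{k}{2}}\frac{\mathcal S(r-k)}{\mathcal S(r)}=2^{2rk-k^2/2-k/2}\,r^{-\eta_1 k}\,e^{O(k+\log r)}.
\]

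With $2^{2rk}=n^{(1-2\eps)k}$ and $\mathcal N(r,k)/\binom{n}{r}^2\le r^{2k}/(k!\,n^k)$, the $k$th contribution to $\E[X^2]/\E[X]^2$ (for $k\ge 1$) is at most $r^{(2-\eta_1)k}n^{-2\eps k}2^{-k^2/2}e^{O(k+\log r)}/k!$, dominated by $k=1$ and summing to $O(n^{-2\eps}(\log n)^{O(1)})=O(n^{-2\eps+O(\log\log n/\log n)})$. The $k=0$ term equals $\binom{n-r}{r}/\binom{n}{r}=1+o(1)$, and the $k=r$ diagonal term contributes $\E[X]^{-1}=\exp(-\Omega(\log^2 n))$. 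Assembling these gives the variance bound. The main obstacle is recognizing the Selberg identity $\int\prod|w-w|G(w)dw=\mathcal S(r)$ and realizing that the crude pointwise bound $G(w)\le\mathcal S(r-k)$, when paired with this averaging identity, is a far better strategy than the naive product bound $\mathcal S(r-k)^2\mathcal S(k)$ that results from applying the pointwise bound twice.
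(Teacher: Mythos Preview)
Your proof is correct, and the core second-moment framework matches the paper's. The genuine point of departure is the bound on the pair probability $\pr(\mathcal D^*_{I_1}\cap\mathcal D^*_{I_2})$ when $|I_1\cap I_2|=k$.

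The paper simply drops all ``cross'' factors $|x_i-x_j|$ linking $A$ to $K$ and $K$ to $B$ (using $|x_i-x_j|\le 1$) so that the integral factorises into three independent Selberg integrals, yielding the bound
\[
\pr(\mathcal D^*_{I_1}\cap\mathcal D^*_{I_2})\le 2^{-(r)_2+\binom{k}{2}}\,\mathcal S(r-k)^2\,\mathcal S(k).
\]
You instead retain the cross factors, write the integral as $\int_{[0,1]^k}\prod_{K}|w_i-w_j|\,G(w)^2\,dw$, and combine the pointwise bound $G(w)\le\mathcal S(r-k)$ with the identity $\int\prod_{K}|w_i-w_j|\,G(w)\,dw=\mathcal S(r)$ (the full $r$-variable Selberg integral). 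This gives the sharper bound
\[
\pr(\mathcal D^*_{I_1}\cap\mathcal D^*_{I_2})\le 2^{-(r)_2+\binom{k}{2}}\,\mathcal S(r-k)\,\mathcal S(r),
\]
smaller than the paper's by a factor $\mathcal S(r)/(\mathcal S(r-k)\mathcal S(k))\asymp 2^{-2k(r-k)}$.

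This sharpening is not cosmetic. With the paper's cruder product bound the ratio of the $(k\!+\!1)$-st to the $k$-th summand in $\ex[(X^*_{n,r})_2]$ comes out (from the paper's own $\mathcal P^*(r,k)$) as $2^{4r-5k-3}\cdot r^2/n\cdot e^{O(\log r)}$, which at $k=0$ is of order $2^{4r}/n=n^{1-4\eps}$ and hence is small only when $\eps>1/4$; the paper's displayed $2^{2r}/n$ appears to be a slip. Your bound produces the genuinely needed ratio $2^{2r}/n=n^{-2\eps}$, so your argument delivers the theorem over the full stated range $\eps\in(0,1/4)$, i.e.\ $r$ up to $(1/2)\log_2 n$. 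In short: same architecture, but your treatment of the overlap integral is both cleaner and strictly stronger.
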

\begin{proof} We use the proof of Theorem \ref{L_n>2n^{1/3}} as a rough template. Given $0\le k\le r-1$, let 
\[
I_1=I_1(k)\equiv \{1,\dots,r\},\quad I_2=I_2(k)=\{r-k+1,\dots, 2r-k\}; 
\]
so $|I_1|=|I_2|=r$ and $|I_1\cap I_2|=k$. Then, by symmetry, 
\[
\ex\bigl[(X_{n,r}^*)_2\bigr]=\sum_{k=0}^{r-1}\mathcal N(r,k)\mathcal \pr\bigl(\mathcal D^*_{I_1(k)}\cap \mathcal D^*_{I_2(k)}\bigr),
\quad \mathcal N(r,k)=\binom{n}{r}\binom{r}{k}\binom{n-r}{r-k}.
\]
To bound $\mathcal \pr\bigl(\mathcal D^*_{I_1(k)}\cap \mathcal D^*_{I_2(k)}\bigr)$, observe that, denoting by $(i,j)$ a generic,
unordered pair $(i\neq j)$, we have
\begin{multline*}
\pr\bigl(\mathcal D_{I_1(k)}^*\cap\mathcal D_{I_2(k)}^*\,|\,f_{i,i}=x_i,\,i\in I_1\cup I_2\bigr)=\prod_{(i, j)\atop i, j\in [1,r]\cup [r-k+1,2r-k]}
\!\!\!\!\!\!\!\!\!\!\!\frac{|x_i-x_j|}{2}\\
\le 2^{-(r)_2+\binom{k}{2}}\prod_{(i,j)\atop i,j\in [1,r-k]} |x_i-x_j|\\
\times\prod_{(i,j)\atop i,j\in [r-k+1,r]} |x_i-x_j| \prod_{(i,j)\atop i,j\in [r+1,2r-k]} |x_i-x_j|.
\end{multline*}
Unconditioning and using \eqref{=}, we obtain: 
\begin{equation}\label{uncond}
\begin{aligned}
&\pr\bigl(\mathcal D_{I_1(k)}^*\cap\mathcal D_{I_2(k)}^*\bigr)
=\mathcal P^*(r,k) e^{O(\log r)},\\
\mathcal P^*(r,k)&:=2^{-(r)_2+\binom{k}{2}}\cdot 2^{-2(r-k)^2-k^2}\\
&\qquad\times\exp\Bigl[2\eta_1^*(r-k)\log (r-k)+\eta_1^*k\log k +2\eta_2^*(r-k)+\eta_2^*k \Bigr].
\end{aligned}
\end{equation}
It follows that 
\begin{align*}
\frac{\mathcal N(r,k+1)\pr\bigl(\mathcal D_{I_1(k+1)}^*\cap\mathcal D_{I_2(k+1)}^*\bigr)}
{\mathcal N(r,k)\pr\bigl(\mathcal D_{I_1(k)}^*\cap\mathcal D_{I_2(k)}^*\bigr)}&\le \frac{2^{2r}}{n} \exp\bigl(O(\log r)\bigr)\\
&\le n^{-2\eps +o(1)}\to 0,
\end{align*}
since $r\le (0.5-\eps)\log _2 n$. Consequently
\[
\frac{\ex\bigl[(X_{n,r}^*)_2\bigr]}{\mathcal N(r,0) \pr^2(\mathcal D_{I_1(0)}^*)}\le 1+n^{-2\eps +o(1)}.
\]
Since 
\begin{align*}
&\mathcal N(r,0)=\bigl(1+O(r^{2}/n)\bigr)\binom{n}{r}^2,\\
& \ex\bigl[X_{n,r}^*\bigr]=\pr(\mathcal D_{I_1^*}(0))\binom{n}{r}\ge \exp\bigl(\Theta( \log^2 n)\bigr),
\end{align*}
the Chebyshev inequality completes the proof.
\end{proof}

{\bf Acknowledgment.\/} About thirty years ago John Kingman gave a lecture on stable polymorphisms at
Stanford University. The talk made a deep, lasting impression on me. At that time Don Knuth introduced me to his
striking formula for the expected number of stable matchings via a highly-dimensional integral, \cite{Knu}. Despite the world of difference between the stable polymorphisms and the stable matchings, the multidimensional integrals expressing the probability of respective stability conditions are of a similar kind.

\end{document}